\newtheorem{theorem}{Theorem}
\newtheorem{question}[theorem]{Question}
\newtheorem{proposition}[theorem]{Proposition}
\newtheorem{example}[theorem]{Example}
\newcommand{\bC}{\mathbb{C}}
\newcommand{\bN}{\mathbb{N}}
\newcommand{\cV}{\mathcal{V}}
\newcommand{\tr}{{\rm tr}\,}
\newcommand{\ran}{{\rm ran}\,}
\newcommand{\supp}{{\rm \, supp \,}}
\begin{document}

\baselineskip 6.5mm

\title{Triangularizability of trace-class operators with increasing spectrum} 

\author{Roman Drnov\v sek}

\date{\today}

\begin{abstract}
\baselineskip 6.5mm

For any measurable set $E$ of a measure space $(X, \mu)$, let $P_E$ be the (orthogonal) projection on the Hilbert space 
$L^2(X, \mu)$ with the range $\ran P_E = \{ f \in L^2(X, \mu) : f = 0 \ \textrm{a.e. on} \ E^c\}$  that 
is called a standard subspace of $L^2(X, \mu)$. 
Let $T$ be an operator on $L^2 (X, \mu)$ having increasing spectrum relative to standard compressions, that is, 
for any measurable sets $E$ and $F$ with $E \subseteq F$, the spectrum of the operator 
$P_E T|_{\ran P_E}$ is contained in the spectrum of the operator $P_F T|_{\ran P_F}$. 
In 2009, Marcoux, Mastnak and Radjavi asked whether the operator $T$  has a non-trivial invariant standard subspace. 
They answered this question affirmatively when either the measure space $(X, \mu)$ is discrete or the operator $T$ has finite rank. We study this problem in the case of trace-class kernel operators. 
We also slightly strengthen the above-mentioned result for finite-rank operators.
\end {abstract}

\maketitle
\noindent
{\it Key words}:  Invariant subspace; Standard subspace; Trace-class operator; Kernel operator; Positive operator; Triangularization \\
{\it Math. Subj.  Classification (2010)}:  47A15, 47A46,  47B10, 47B34, 47B65  \\

\baselineskip 6.6mm

\section{Introduction}

Let $\mu$ be a positive measure on a set $X$  such that $L^2(X, \mu)$  is a separable complex Hilbert space.
For each $\phi \in L^\infty(X, \mu)$, we define the multiplication operator $M_\phi$ on $L^2(X, \mu)$ by 
$M_\phi (f) = \phi f$ \ ($f  \in L^2(X, \mu)$). 
Let  $M^\infty(X, \mu) = \{M_\phi : \phi \in L^\infty(X, \mu)\}$ be the collection of all multiplication operators. 
This collection is a maximal abelian, selfadjoint algebra (usually abbreviated as {\it masa}) in the Banach algebra of all (bounded) operators on $L^2(X, \mu)$. 
The spectrum and the spectral radius of an operator $T$  on $L^2(X, \mu)$ are denoted by $\sigma(T)$ and $r(T)$, respectively.  
An operator $T$  on $L^2(X, \mu)$ is said to be {\it positive} if it maps nonnegative functions to nonnegative ones.

An operator $P$ on the Hilbert space $L^2(X, \mu)$ is called a {\it standard projection corresponding to a measurable set} 
$E \subseteq X$ if it is the multiplication operator by the characteristic function $\chi_E$ of $E$. 
In this case its range $\ran P$ can be identified with the Hilbert space $L^2(E, \mu|_E)$, and it is said to be 
a {\it standard subspace}; in the Banach lattice theory it is called a closed ideal (or a band) of $L^2(X, \mu)$. 
If such a subspace is non-trivial and invariant under an operator $T$ on $L^2 (X, \mu)$, we say that $T$ is {\it decomposable}
 (or, in the Banach lattice theory, $T$ is ideal-reducible).
 
We shall say that an operator $T$ on $L^2(X, \mu)$ {\it admits a standard triangularization} 
(or $T$ is completely decomposable or ideal-triangularizable)  
if we can find a totally ordered set $\Lambda$ and an increasing family $\{P_\lambda\}_{\lambda \in \Lambda}$ 
of standard projections such that $\{\ran P_\lambda\}_{\lambda \in \Lambda}$ is a maximal increasing family of standard subspaces that are all invariant under $T$. 
If the closed span of $\{P_\lambda\}_{\lambda \in \Lambda}$  in the weak operator topology is equal to $M^\infty(X, \mu)$, 
we say that $T$ admits a {\it multiplicity-free} standard triangularization.

Following \cite{MMR}, we say that an operator $T$ on $L^2 (X, \mu)$ {\it has increasing spectrum relative to standard compressions} if 
$$  \sigma (PT|_{\ran P}) \subseteq \sigma (QT|_{\ran Q}) $$
whenever $P$ and $Q$ are standard projections with \, $\ran P \subseteq \ran Q$. 
When this condition is required only for finite-dimensional standard projections $P$ and $Q$, 
the operator $T$ is said to {\it have increasing spectrum relative to finite-dimensional standard compressions}. 
The following two theorems are proved in \cite[Theorems 2.4 and 3.17]{MMR}.

\begin{theorem}
\label{discrete}
Let $\mu$ be the counting measure on a set $X$. If an operator $T$ on $L^2 (X, \mu)$ 
has increasing spectrum relative to finite-dimensional standard compressions, then 
it admits a standard, multiplicity-free triangularization.
\end{theorem}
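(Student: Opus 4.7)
Since $L^2(X,\mu)$ is separable and $\mu$ is counting measure, $X$ is at most countable; after enumeration, represent $T$ by its matrix $(t_{ij})$ in the standard orthonormal basis, so that every finite-dimensional standard compression $P_F T|_{\ran P_F}$ is the principal submatrix $T[F] = (t_{ij})_{i,j \in F}$ for some finite $F \subseteq X$. First I would extract the pairwise consequence of the hypothesis: for distinct $i, j$ the inclusion $\sigma(T[\{i\}]) \subseteq \sigma(T[\{i,j\}])$ forces $t_{ii}$ to be a root of $\lambda^2 - (t_{ii}+t_{jj})\lambda + (t_{ii}t_{jj} - t_{ij}t_{ji})$, which yields $t_{ij} t_{ji} = 0$. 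Accordingly, define the support digraph $\Gamma$ on $X$ with an arrow $j \to i$ whenever $i \neq j$ and $t_{ij} \neq 0$; the pairwise condition is exactly the absence of $2$-cycles in $\Gamma$.

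The crucial step is to show that $\Gamma$ has no directed cycles at all. Suppose the contrary and pick a shortest directed cycle $C \colon i_1 \to i_2 \to \cdots \to i_n \to i_1$, so $n \geq 3$; set $F = \{i_1, \ldots, i_n\}$ and $c_k := t_{i_{k+1}, i_k}$. Minimality of $C$ rules out any chord, since combined with an arc of $C$ such a chord would yield a strictly shorter directed cycle, while reverse arrows $i_{k+1} \to i_k$ vanish by the pairwise nullity. Hence the induced subdigraph $\Gamma[F]$ consists of $C$ alone, so in the permutation-sum formula for $\det(\lambda I - T[F])$ only the identity and the cycle $(i_1 i_2 \cdots i_n)$ contribute nonzero terms. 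A direct sign bookkeeping gives
\[
\det\bigl(\lambda I - T[F]\bigr) \;=\; \prod_{k=1}^n \bigl(\lambda - t_{i_k i_k}\bigr) \;-\; c_1 c_2 \cdots c_n.
\]
But the hypothesis forces $t_{i_1 i_1} \in \sigma(T[F])$, so substituting $\lambda = t_{i_1 i_1}$ yields $c_1 c_2 \cdots c_n = 0$, contradicting $c_k \neq 0$ for each $k$. Thus $\Gamma$ is a directed acyclic graph.

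The rest is essentially bookkeeping. Since $\Gamma$ is acyclic and $X$ is countable, its reachability relation is a partial order that admits a linear extension $\prec$ on $X$ (Szpilrajn's theorem, or a direct greedy enumeration) satisfying $t_{ij} \neq 0 \Rightarrow i \preceq j$. Let $\Lambda$ be the collection of initial segments of $(X, \prec)$ and set $P_I = M_{\chi_I}$ for $I \in \Lambda$. If $j \in I$ and $i \notin I$, then $j \prec i$ strictly, so $t_{ij} = 0$; thus every $\ran P_I$ is $T$-invariant. The family $\{P_I\}_{I \in \Lambda}$ is a maximal chain of standard subspaces, because any subset of $X$ comparable with every initial segment of $\prec$ is itself such a segment. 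Finally, for each $x \in X$ the initial segments $\{y : y \prec x\}$ and $\{y : y \preceq x\}$ belong to $\Lambda$, and the corresponding projections differ by the rank-one projection onto $e_x$; hence the weak-operator closed linear span of $\{P_I\}$ contains every rank-one diagonal projection and therefore equals $M^\infty(X, \mu)$, yielding the desired multiplicity-free standard triangularization.

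I expect the acyclicity step to be the main obstacle. The set-theoretic inclusion $\sigma(T[E]) \subseteq \sigma(T[F])$ does not a priori convert into a clean algebraic identity, because many permutations can contribute simultaneously to the characteristic polynomial of $T[F]$ and entangle the constraints. The trick of passing to a \emph{shortest} cycle collapses the permutation sum to exactly two terms, producing a single equation whose incompatibility with the requirement $t_{i_1 i_1} \in \sigma(T[F])$ delivers the contradiction.
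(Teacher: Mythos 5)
Your argument is correct. Note that the paper itself gives no proof of this statement --- it is imported verbatim from \cite[Theorem 2.4]{MMR} --- so the only comparison available is with the analogous arguments the paper does carry out. Your strategy is exactly the discrete, determinantal counterpart of those: you first kill $2$-cycles via the compression to a pair of atoms (the identity $t_{ij}t_{ji}=0$ checks out), then take a \emph{shortest} directed cycle, observe that minimality together with the no-$2$-cycle property forbids chords and reverse arcs, so that the characteristic polynomial of the $n\times n$ compression collapses to $\prod_k(\lambda-t_{i_ki_k})-c_1\cdots c_n$, and evaluate at the eigenvalue $t_{i_1i_1}$ forced by the increasing-spectrum hypothesis to get $c_1\cdots c_n=0$. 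This is precisely the role played in Theorems~\ref{unit_interval} and~\ref{main} by the trace identities $\tr((PKP)^n)=0$ applied to a minimal non-degenerate cycle; the determinant of a shortest cycle and the $n$-th trace of a minimal cycle both isolate the single product $k(x_1,x_2)\cdots k(x_n,x_1)$ (respectively $c_1\cdots c_n$). Your concluding bookkeeping --- acyclicity of the support digraph, a linear extension of the reachability order, initial segments as a maximal chain of invariant standard subspaces, and recovery of the full diagonal masa from consecutive differences --- is also sound, including the maximality verification. So the proposal is a complete and correct proof, in the same spirit as (and arguably a model for) the continuous arguments of Sections 2 and 3.
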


\begin{theorem}
\label{finite_rank_MMR}
Let $T$ be an operator on $L^2(X, \mu)$ of rank $k  \in \bN$.
If $T$  has increasing spectrum relative to standard compressions, then it admits a standard, multiplicity-free triangularization. 
Furthermore, there is a chain of projections 
$$ 0 = P_0 < P_1 < \cdots < P_{3 k-1} < P_{3 k} = I $$
in $M^\infty(X, \mu)$, whose ranges are all invariant under $T$,  such that
$$ (P_j - P_{j-1}) T (P_j - P_{j-1}) = 0 $$
whenever $P_j - P_{j-1}$ has rank more than one.
\end{theorem}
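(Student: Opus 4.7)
The plan is to proceed by induction on the rank $k$, establishing both the multiplicity-free triangularization and the existence of the coarse chain of at most $3k$ gaps simultaneously.

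For the base case $k=1$, I would write $T = u \otimes v^*$ with $A = \supp u$ and $B = \supp v$, and split into two subcases. If $A \cap B$ has positive measure, then for every measurable $E \subseteq A \cap B$ the compression $P_E T|_{\ran P_E}$ is a rank-one operator with spectrum $\{0, \int_E u \bar v\}$, so the increasing spectrum condition forces $\int_E u \bar v \in \{0, \langle u, v \rangle\}$; by a Lyapunov-type argument this can happen only if $A \cap B$ reduces to a single atom $\{x_0\}$, in which case the chain
$$ 0 < \chi_{X \setminus B} < \chi_{(X \setminus B) \cup \{x_0\}} < I $$
supplies the required three gaps with only the rank-one middle gap carrying a nonzero compression. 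If $A \cap B$ is null, then $T$ is nilpotent and the chain $0 < \chi_{(A \cup B)^c} < \chi_{B^c} < I$ has zero compression on every gap.

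For the inductive step, the central task is to exhibit a non-trivial $T$-invariant standard subspace $L^2(F)$. Writing $T = \sum_i u_i \otimes v_i^*$, set $A = \bigcup_i \supp u_i$ and $B = \bigcup_i \supp v_i$. If $A \neq X$ then $L^2(A) \supseteq \ran T$ is invariant; if $B \neq X$ then $L^2(B^c) \subseteq \ker T$ is invariant. In the substantive case $A = B = X$, I plan to exploit the spectrum condition: for $\lambda \in \sigma(T) \setminus \{0\}$, consider the upward-closed family $\mathcal{F}_\lambda = \{F : \lambda \in \sigma(P_F T|_{\ran P_F})\}$. The compactness of $T$ together with semicontinuity of spectra under monotone intersections yields a minimal element $F_0 \in \mathcal{F}_\lambda$; any eigenvector $\xi$ of $P_{F_0} T|_{\ran P_{F_0}}$ at $\lambda$ must satisfy $\supp \xi = F_0$ (otherwise $\supp \xi$ would be a smaller member of $\mathcal{F}_\lambda$), and a perturbation argument on $\xi$ forces $P_{F_0^c} T P_{F_0} = 0$, i.e., $L^2(F_0)$ is invariant. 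The quasinilpotent case $\sigma(T) = \{0\}$ is treated analogously by replacing spectral minimization with a minimization over supports of iterated ranges $T^j(L^2(F))$.

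Given an invariant $L^2(F)$ with $0 \neq F \neq X$, $T$ assumes block form with blocks $T_1 = P_F T|_{L^2(F)}$ and $T_2 = P_{F^c} T|_{L^2(F^c)}$ of ranks $k_1, k_2$ satisfying $k_1 + k_2 \leq k$ (by the standard block upper-triangular rank inequality). Each block inherits the increasing spectrum condition, and the inductive hypothesis supplies chains of at most $3 k_1$ and $3 k_2$ gaps; concatenating through $P_F$ yields a chain of at most $3k$ gaps for $T$. The multiplicity-free triangularization then follows by refining each zero-compression gap to a maximal chain of standard subspaces via Zorn's lemma, with Theorem \ref{discrete} ensuring multiplicity-freeness on the atomic parts. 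The main obstacle I anticipate is the construction of the invariant subspace in the case $A = B = X$, especially the quasinilpotent subcase, where the spectrum condition must be leveraged delicately rather than through visible support obstructions; the tight rank accounting in the concatenation step is a secondary but non-trivial concern, since the budget $3k$ leaves no room for any wasted transition projection.
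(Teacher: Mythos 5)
Your overall strategy (induct on the rank, find a non-trivial invariant standard subspace, split into two blocks, and concatenate chains) is reasonable in outline, but the step on which everything hinges is not established, and the specific mechanism you propose for it is incorrect. In the case $A=B=X$ you claim that a minimal $F_0\in\mathcal{F}_\lambda$ admits an eigenvector $\xi$ with $\supp\xi=F_0$ and that ``a perturbation argument on $\xi$ forces $P_{F_0^c}TP_{F_0}=0$.'' Minimality does not imply this. Take $X=\{1,2\}$ with counting measure and
$$ T=\left[\begin{matrix}1&0\\1&0\end{matrix}\right], $$
which has increasing spectrum ($\sigma(P_{\{1\}}T|)=\{1\}$, $\sigma(P_{\{2\}}T|)=\{0\}$, $\sigma(T)=\{0,1\}$). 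Here $F_0=\{1\}$ is the minimal set with $1$ in the spectrum of the compression, the eigenvector $e_1$ has full support in $F_0$, yet $P_{F_0^c}TP_{F_0}\neq 0$ and $L^2(F_0)$ is not invariant. (This example happens to fall under your case $B\neq X$, but nothing in your argument for the case $A=B=X$ uses that hypothesis to rescue the implication, so the step is unjustified.) The correct fact, which is what \cite[Corollary 3.8]{MMR} provides and what the paper uses, is that each non-zero eigenvalue is carried by an \emph{atom} $\{a\}$ with $P_aTP_a=\lambda P_a$; this gives a rank-one diagonal block, not an invariant subspace, and one must still subtract these atomic pieces off (via \cite[Corollary 3.12]{MMR}) and deal with the remaining operator, all of whose standard compressions are nilpotent.

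That nilpotent case is the real core of the theorem, and your proposal disposes of it in one sentence (``minimization over supports of iterated ranges''), which is not an argument. The paper's Section 4 is devoted precisely to this point: Proposition \ref{zero_row} shows that a finite-rank operator with nilpotent standard compressions satisfies $QK=0$ for some non-zero standard projection $Q$, and its proof is genuinely delicate --- it uses the trace identities $\tr(PKP)=0$ and $\tr((PKP)^2)=0$, the matrix-valued set function $M(E)=\int_E G(x)F(x)^t\,d\mu$, a conjugation annihilating the $(1,n)$-entry, and a descent on the rank; the author introduces it exactly because the corresponding step in \cite[Lemma 3.14]{MMR} was not established. Nothing in your sketch substitutes for this. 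Finally, your gap count is also not secured: if the invariant subspace you find makes one diagonal block zero (rank $k_1=0$), the concatenation gives up to $1+3k$ gaps rather than $3k$, and the extra zero block cannot in general be merged with an adjacent gap without violating the condition $(P_j-P_{j-1})T(P_j-P_{j-1})=0$ on gaps of rank greater than one.
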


Furthermore, \cite[Corollary 3.5]{MMR} provides an example of  a compact quasinilpotent operator 
that has increasing spectrum relative to standard compressions, but it only admits a standard triangularization that is not multiplicity-free.  

The authors of  \cite{MMR} left open the following question:

\begin{question}
\label{open} 
Suppose that $K$ is a compact operator on $L^2(X, \mu)$ that has increasing spectrum 
relative to standard compressions. Does $K$ admit a standard triangularization (which need not be multiplicity-free)? 
In particular, is $K$ decomposable?
\end{question}

It is worth mentioning that an affirmative answer to Question \ref{open} would extend the well-known de Pagter's theorem 
(see \cite{Pa86} or \cite[Theorem 9.19]{AbAl}) asserting that a positive compact quasinilpotent operator $K$ on $L^2(X, \mu)$ is decomposable. 
Namely, it is easy to show (see, e.g., the beginning of \cite[page 3520]{MMR})
that positivity of $K$ implies that the operator $P K P$ is quasinilpotent for each standard projection $P$, so that $K$ has increasing spectrum in this case. 

In this paper we consider Question \ref{open} in the case of trace-class kernel operators.
In Section 2 the underlying measure space is the unit interval $[0, 1]$, while in Section 3 we add atoms to it, and we prove the main result of the paper (Theorem \ref{main}). In Section 4 we slightly improve Theorem \ref{finite_rank_MMR} that is 
the main result of \cite{MMR}. In the rest of this section we recall some relevant definitions and facts.

An operator $K$ on $L^2(X, \mu)$ is called a {\it kernel operator} if there exists a measurable function 
$k  : X \times X \to \bC$ such that, for every $f \in L^2(X, \mu)$, the equality
$$ (Kf)(x) = \int_X k(x,y) f(y) d\mu(y)  $$
holds for almost every $x \in X$. The function $k$ is called the {\it kernel} of the operator $K$.
The kernel operator $K$ is positive if and only if its kernel $k$ is nonnegative almost everywhere.
If the kernel operator $K$ with kernel $k$ has the modulus $|K|$, then the kernel of $|K|$ is equal to $|k|$ almost everywhere.
For more details on kernel operators we refer the reader to  \cite{Za83}.

Given a compact operator $T$ on $L^2(X, \mu)$,  let $\{s_j(T)\}_j$ be a decreasing sequence of 
singular values of $T$, i.e.,  the square roots of the eigenvalues of the self-adjoint operator $T^* T$, 
where $T^*$ denotes the adjoint of $T$. If $\sum_j s_j(T) < \infty$, the operator $T$ is said to be a {\it trace-class} operator.
In this case, the {\it trace of} $T$ is defined by 
$$ \tr (T) = \sum_{n=1}^\infty \langle T f_n, f_n \rangle , $$
where $\{f_n\}_{n=1}^\infty$ is any orthonormal basis of $L^2(X, \mu)$.  By Lidskii's Theorem, the trace of a trace-class operator $T$ is equal to the sum of all eigenvalues of $T$ counting algebraic multiplicity.
We will use freely the equality $\tr (A T) = \tr (T A) $ that holds for every trace-class operator $T$ and every operator $A$.
For more details on trace-class operators see, e.g., \cite{RaRo}.

\section{The unit interval $[0,1]$}

Let $K$ be a kernel operator on $L^2[0,1]$ with a continuous kernel $k  : [0,1] \times [0,1] \to \bC$.
If $K$ is a trace-class operator, then the trace of $K$ is equal to the integral of its kernel along the diagonal:
$$ \tr (K) = \int_0^1 k(x,x) dx . $$
This equality is proved, for example, in \cite[Theorem 12 in Chapter 30]{La}.

The proof of the following theorem is a slight modification of the proof of   \cite[Theorem 3.6]{MMR}.
For $t \in [0,1]$, let $P_t$ denote the standard projection corresponding to the interval $[0, t]$.

\begin{theorem}
\label{quasi}
Let $K$ be a compact operator on $L^2[0,1]$. 
Suppose that $\sigma(P_t K P_t) \subseteq \sigma(K)$ for each $t \in [0,1]$ 
(in particular, if $K$ has increasing spectrum relative to standard compressions).
Then $K$ is quasinilpotent.
\end{theorem}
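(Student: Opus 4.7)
The plan is to argue by contradiction using Riesz spectral projections, in the spirit of \cite[Theorem 3.6]{MMR} but with the discrete chain of compressions replaced by a continuous parameter. If $\lambda_0$ were a nonzero point of $\sigma(K)$, I would attach to $A_t := P_t K P_t$ the Riesz projection $E_t$ around $\lambda_0$ and show that $\dim \ran E_t$ is independent of $t \in [0,1]$. Since $A_0 = 0$ forces $\dim \ran E_0 = 0$ while $A_1 = K$ gives $\dim \ran E_1 \geq 1$, no such $\lambda_0$ can exist, forcing $\sigma(K) = \{0\}$.

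The first step is to verify norm continuity of $t \mapsto A_t$. As $t \to s$, the standard projections $P_t$ converge to $P_s$ in the strong operator topology, and since $K$ is compact the set $K(B_1)$ is totally bounded, so the uniformly bounded family $(P_t - P_s)$ converges to $0$ uniformly on $K(B_1)$, giving $\|(P_t - P_s) K\| \to 0$. Passing to adjoints and using compactness of $K^*$ yields $\|K(P_t - P_s)\| \to 0$ as well, and hence $\|A_t - A_s\| \to 0$.

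Next I fix a hypothetical nonzero $\lambda_0 \in \sigma(K)$. Since $K$ is compact, $\lambda_0$ is isolated in $\sigma(K)$, so I choose $\varepsilon > 0$ with the closed disk $\overline{D(\lambda_0, \varepsilon)}$ meeting $\sigma(K)$ only at $\lambda_0$ and missing $0$. The standing hypothesis $\sigma(A_t) \subseteq \sigma(K)$ then ensures that the circle $\Gamma := \partial D(\lambda_0, \varepsilon)$ is disjoint from $\sigma(A_t)$ for \emph{every} $t \in [0,1]$, so the Riesz projection
\[
E_t := \frac{1}{2 \pi i} \oint_\Gamma (z I - A_t)^{-1} \, dz
\]
is defined throughout $[0,1]$. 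Norm continuity of $t \mapsto A_t$, together with a uniform bound on the resolvent $(zI - A_t)^{-1}$ over the compact curve $\Gamma$, upgrades to norm continuity of $t \mapsto E_t$. Hence $t \mapsto \dim \ran E_t$ is a continuous integer-valued function on the connected set $[0,1]$, thus constant. But $\dim \ran E_0 = 0$ since $A_0 = 0$, while $\dim \ran E_1$ is the algebraic multiplicity of $\lambda_0$ as an eigenvalue of $K$, which is at least $1$ --- a contradiction.

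The main obstacle I foresee is the first step: promoting strong continuity of $t \mapsto P_t$ to norm continuity of $t \mapsto A_t$. Compactness of $K$ is exactly what makes this work, and it is also what isolates each nonzero eigenvalue; all subsequent perturbation theory is then routine once $\Gamma$ has been pinned uniformly off of $\sigma(A_t)$.
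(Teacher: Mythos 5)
Your proof is correct, but it takes a genuinely different route from the paper's. The paper also relies on norm continuity of $t \mapsto P_t K P_t$ (asserted there with a reference, whereas you prove it via total boundedness of the image of the unit ball under the compact operator $K$), but it then invokes Newburgh's theorem on continuity of the spectral radius at compact operators: the function $\varphi(t) = r(P_t K P_t)$ is continuous with $\varphi(0)=0$ and $\varphi(1)=r(K)$, so if $r(K)>0$ the intermediate value theorem together with $\sigma(P_tKP_t)\subseteq\sigma(K)$ forces $\sigma(K)$ to contain points of every modulus in $[0,r(K)]$, contradicting the countability of the spectrum of a compact operator. You replace this cardinality argument by homotopy invariance of the rank of the Riesz projection: the hypothesis keeps the contour $\Gamma$ uniformly in the resolvent set of every $A_t$, the resolvent is bounded on the compact set $\Gamma\times[0,1]$, and since two idempotents at norm distance less than $1$ are similar, $t\mapsto\dim\ran E_t$ is locally constant, so comparing $t=0$ and $t=1$ gives the contradiction; all the steps you sketch (isolation of $\lambda_0$, the resolvent identity giving norm continuity of $E_t$, finiteness of $\dim\ran E_1$) are sound. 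What your approach buys is independence from Newburgh's continuity result, using only the isolation of nonzero points in the spectrum of a compact operator; what the paper's approach buys is brevity, since once continuity of $\varphi$ is granted the contradiction is immediate. Both arguments use the hypothesis only through the one-parameter family $\{P_t\}$, not the full strength of increasing spectrum.
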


\begin{proof}
Assume that $r(K) > 0$. Since $K$ is compact, the map $t \mapsto P_t K P_t$ is continuous, and so the map  $\varphi: t \mapsto r(P_t K P_t)$
is continuous on $[0,1]$ (see, for example, \cite{Ne}). Since $\varphi(0)=0$ and $\varphi(1)=r(K)$, we conclude that
$[0, r(K)] \subseteq {\rm ran \, } \varphi$. 
Now, for each $t \in [0,1]$, there exists a complex number $\lambda_t \in \sigma(P_t K P_t)$ such that  
$| \lambda_t | = r(P_t K P_t) = \varphi(t)$.
Since $\sigma(P_t K P_t) \subseteq \sigma(K)$, this implies that $\{ \lambda_t :  t \in [0,1] \} \subseteq \sigma(K)$ 
contradicting the fact that $\sigma(K)$ is at most countable set.
Hence $K$ must be quasinilpotent.
\end{proof}

Following the definition in \cite{MMR}, we say that a kernel operator $K$ on $L^2[0,1]$ 
with a continuous kernel $k$ admits a {\it non-degenerate cycle of length} $n \ge 2$ 
if there exist $n$ pairwise distinct numbers $x_1$, $x_2$, $\ldots$, $x_n$ in $[0,1]$ such that
$$ k(x_1, x_2) k(x_2, x_3) k(x_3, x_4) \cdots k(x_{n-1}, x_n) k(x_n, x_1)  \neq 0 . $$
 
The following theorem provides a partial answer to Question \ref{open}. 

\begin{theorem}
\label{unit_interval}
Let $K$ be a trace-class kernel operator on $L^2[0,1]$ with a continuous kernel $k$. 
Suppose that $K$ has increasing spectrum relative to standard compressions and that the modulus $|K|$ is also a trace-class 
operator. Then $K$ and $|K|$ are quasinilpotent operators admitting a (common) standard triangularization.
\end{theorem}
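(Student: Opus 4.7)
The plan is to show (a) that $K$ has no non-degenerate cycle, (b) that this forces $|K|$ to be quasinilpotent, and (c) to combine de Pagter's theorem with Zorn's lemma to obtain the common standard triangularization.

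As a starting point, Theorem~\ref{quasi} gives that $K$ is quasinilpotent. Since $\sigma(P_E K P_E)\subseteq\sigma(K)=\{0\}$ for every measurable $E\subseteq[0,1]$, each compression $P_E K P_E$ is quasinilpotent and trace-class. Lidskii's theorem combined with the continuous-kernel trace formula recalled at the start of this section then yields
\[
\int_{E^n}k(x_1,x_2)\,k(x_2,x_3)\cdots k(x_n,x_1)\,dx_1\cdots dx_n=0
\]
for every measurable $E$ and every $n\geq 1$.

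For (a), I would suppose toward a contradiction that $K$ has a non-degenerate cycle and pick one, $(x_1,\ldots,x_m)$, of minimal length $m\geq 2$. For each $T\subseteq\{1,\ldots,m\}$, I choose pairwise disjoint neighborhoods $U_i\ni x_i$ ($i\in T$) of common measure $s$ and apply the displayed identity to $E_T=\bigcup_{i\in T}U_i$. Decomposing $E_T^n$ as the disjoint union of products $U_{\alpha_1}\times\cdots\times U_{\alpha_n}$ over maps $\alpha\colon\{1,\ldots,n\}\to T$, and using uniform continuity of $k$ on $[0,1]^2$, I let $s\to 0$ to extract $\tr(A_T^n)=0$ for every $n\geq 1$, where $A_T=(k(x_i,x_j))_{i,j\in T}$. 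By Newton's identities each $A_T$ is nilpotent. Viewing $A:=A_{\{1,\ldots,m\}}$ as the weighted adjacency matrix of a directed graph on $\{1,\ldots,m\}$, the minimality of $m$ forces every proper principal submatrix to have acyclic support, while the $|T|=2$ case yields $A_{ij}A_{ji}=0$ for all $i\neq j$. A case analysis then shows that $A$ can carry no edge outside the cycle $1\to 2\to\cdots\to m\to 1$: any extra edge $A_{ab}\neq 0$ with $b\not\equiv a+1\pmod m$ would, together with a segment of the main cycle, close a cycle of length strictly less than $m$, violating minimality (or, if the length equals $2$, the pairwise vanishing). Hence the only permutation contributing to $\det(A)$ is the $m$-cycle, so $\det(A)=\pm\prod_{i=1}^m k(x_i,x_{i+1})\neq 0$, contradicting the nilpotency of $A$.

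Once (a) is in place, (b) and (c) are short. Since $k$ and $|k|$ have identical zero sets, $|K|$ also has no non-degenerate cycle, so $|k(x_1,x_2)|\cdots|k(x_n,x_1)|=|k(x_1,x_2)\cdots k(x_n,x_1)|$ vanishes on the full-measure subset of pairwise distinct tuples, giving $\tr(|K|^n)=0$ for every $n\geq 1$. The Fredholm-determinant identity
\[
\prod_i\bigl(1-z\lambda_i(|K|)\bigr)=\exp\Bigl(-\sum_{n\geq 1}\frac{z^n}{n}\tr(|K|^n)\Bigr)=1
\]
then forces every eigenvalue of $|K|$ to vanish, so $|K|$ is quasinilpotent. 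By de Pagter's theorem, $|K|$ has a non-trivial invariant standard subspace $L^2(F)$; and since $k$ vanishes on $F\times F^c$ if and only if $|k|$ does, $L^2(F)$ is $K$-invariant as well. Zorn's lemma now produces a maximal increasing family of standard subspaces invariant under both operators, which is the desired common standard triangularization. The principal obstacle I anticipate is the combinatorial endgame in (a), where ruling out every extra edge of $A$ requires the precise case analysis combining minimality of the cycle length with the pairwise vanishing.
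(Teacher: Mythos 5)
Your proposal is correct and follows essentially the same route as the paper: quasinilpotence from Theorem~\ref{quasi}, vanishing of $\tr((P_EKP_E)^n)$ for shrinking unions of intervals to control the finite matrices $\bigl(k(x_i,x_j)\bigr)$, elimination of a minimal non-degenerate cycle via the minimality argument, then $\tr(|K|^n)=0$ and de Pagter's theorem. The only differences are cosmetic repackagings --- you conclude via nilpotency of $A_T$ and $\det(A)\neq 0$ where the paper computes $\tr((PKP)^n)=n\,k(x_1,x_2)\cdots k(x_n,x_1)=0$ directly, and you substitute the Fredholm-determinant identity and an explicit Zorn argument for the paper's citations of \cite[Theorem 14 in Chapter 30]{La} and \cite[Theorem 8.7.9]{RaRo}.
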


\begin{proof}
By Theorem \ref{quasi}, $K$ is quasinilpotent.  
For $x \in [0,1) $ and $r \in (0,1-x)$, let $P_{x, r}$ be the standard projection corresponding to the interval $[x, x+r]$.
Since the operator $P_{x, r} K P_{x, r}$ is a quasinilpotent trace-class kernel operator on $L^2[0,1]$,
we have 
$$ 0 = \tr (P_{x, r} K P_{x, r}) = \int_x^{x+r} k(t,t) dt , $$
and so 
$$ 0 = \lim_{r \rightarrow 0} \frac{1}{r} \int_x^{x+r} k(t,t) \, dt = k(x, x) . $$

Now choose distinct $x, y  \in [0,1)$ and $r>0$ such that $[x, x+r]$ and $[y, y+r]$ are disjoint intervals 
contained in $[0,1]$. 
Since the operators $(P_{x, r}+P_{y, r}) K (P_{x, r}+P_{y, r})$, $P_{x, r} K P_{x, r}$ and 
$P_{y, r} K P_{y, r}$ are quasinilpotent trace-class kernel operators on $L^2[0,1]$,
we have 
$$ 0 = \tr (((P_{x, r}+P_{y, r}) K (P_{x, r}+P_{y, r}))^2)  = 
\tr ((P_{x, r}+P_{y, r}) K (P_{x, r}+P_{y, r}) K) = $$
$$ = \tr ((P_{x, r} K P_{x, r})^2) + \tr (P_{x, r} K P_{y, r} K) + 
\tr (P_{y, r} K P_{x, r} K ) + \tr ((P_{y, r} K P_{y, r})^2) = $$
$$ =  2 \cdot \tr (P_{x, r} K P_{y, r} K )  = 
2 \cdot \int_x^{x+r} ds \int_y^{y+r} k(s,t) k(t,s) dt . $$
Dividing by $r^2$ and letting $r$ tend to 0 we conclude that $k(x,y) k(y,x) = 0$ for all $x, y  \in [0,1)$.
Since $k$ is continuous, this holds for all $x, y  \in [0,1]$, and therefore $K$ does not admit 
any non-degenerate cycles of length $2$.

We claim that $K$ does not admit any non-degenerate cycles of length $n$ for any $n \ge 3$.
Assume otherwise. Let $n \ge 3$ be the smallest positive integer for which $K$ admits a non-degenerate cycle of length $n$.
Let  $x_1$, $x_2$, $\ldots$, $x_n$ be pairwise distinct numbers in $[0, 1)$ such that
$$ k(x_1, x_2) k(x_2, x_3) k(x_3, x_4) \cdots k(x_{n-1}, x_n) k(x_n, x_1)  \neq 0 . $$
If $1 \le i < j \le n$ with $j-i\le n-2$, it follows from 
$$ k(x_i, x_{i+1}) k(x_{i+1}, x_{i+2}) \cdots k(x_{j-1}, x_j)  \neq 0 $$
and 
$$ k(x_i, x_{i+1}) k(x_{i+1}, x_{i+2}) \cdots k(x_{j-1}, x_j)   k(x_j, x_i) = 0 $$
that $k(x_j, x_i) = 0$.

Let $r > 0$ be small enough so that  $\{[x_i, x_i+r]\}_{i=1}^n$ are pairwise disjoint intervals 
contained in $[0,1]$. Then the standard projections $P_i := P_{x_i, r}$ are pairwise orthogonal.
Set $K_{i, j} := P_i K P_j$. 
Since the operator $(P_1 +\ldots + P_n) K(P_1 +\ldots + P_n)$ is a quasinilpotent trace-class kernel operator on $L^2[0,1]$, we have 
$$
 0 = \tr \left( ((P_1 +\ldots + P_n) K(P_1 +\ldots + P_n))^n  \right) =
\tr \left( \left( \sum_{i=1}^n \sum_{j=1}^n K_{i, j} \right)^{\! n}  \right) = 
$$
\begin{equation}
\label{zero}
 = \sum_{i_1,\ldots,i_n=1}^n \tr (K_{i_1, i_2} K_{i_2, i_3} \cdots K_{i_{n-1}, i_n} K_{i_n, i_1}) .
\end{equation}
The last trace is equal to the integral 
$$ 
\int_{x_{i_1}}^{x_{i_1}+r}  dt_1 \int_{x_{i_2}}^{x_{i_2}+r} dt_2 \cdots  \int_{x_{i_n}}^{x_{i_n}+r} k(t_1, t_2) \, k(t_2, t_3) \, k(t_3, t_4) \cdots k(t_{n-1}, t_n) \, k(t_n, t_1) \,  dt_n . $$
Dividing by $r^n$ and letting $r$ tend to 0 we conclude from (\ref{zero}) that
$$ 0 =   \sum_{i_1,\ldots,i_n=1}^n k(x_{i_1}, x_{i_2}) k(x_{i_2}, x_{i_3}) \cdots 
k(x_{i_{n-1}}, x_{i_n}) k(x_{i_n}, x_{i_1})  . $$
Since $k(x_j, x_i) = 0$ for all $i$, $j$ with $1 \le i \le j \le n$ and $j-i\le n-2$ and 
since $K$ does not admit any non-degenerate cycles of length smaller than $n$, we finally obtain that 
$$ 0 = n \, k(x_1, x_2) k(x_2, x_3) k(x_3, x_4) \cdots k(x_{n-1}, x_n) k(x_n, x_1) .$$
This contradiction shows that $K$ does not admit any non-degenerate cycles.

For every $n \ge 2$ and for every numbers  $x_1$, $x_2$, $\ldots$, $x_n$ in $[0,1]$,  we therefore have
$$ k(x_1, x_2) k(x_2, x_3) \cdots k(x_{n-1}, x_n) k(x_n, x_1)  = 0 . $$
The modulus $|K|$ of $K$ has kernel $|k|$, so that the kernel of $|K|^n$ at point $(x,x) \in [0, 1] \times [0, 1]$
 is equal to the integral 
$$ \int_0^1 dt_1 \int_0^1 dt_2 \cdots  \int_0^1 |k(x, t_1)| \, |k(t_1, t_2)| \, |k(t_2, t_3)| \cdots |k(t_{n-1}, x)| \,  dt_{n-1} 
= 0 . $$
Therefore, $\tr (|K|^n)  = 0$ for all $n \in \bN$. By a well-known theorem (see e. g. \cite[Theorem 14 in Chapter 30]{La}),
we conclude that $|K|$ is a quasinilpotent trace-class positive operator. 
It follows that $|K|$ admits a standard triangularization, by (a corollary to) de Pagter's theorem 
(see, e.g., \cite[Theorem 8.7.9]{RaRo}).  It is clear that the same family of standard subspaces is also invariant under $K$.
This completes the proof. 
\end{proof}

The example following the proof of \cite[Corollary 3.5]{MMR} shows that in Theorem \ref{unit_interval} 
we cannot conclude that $K$ admits a multiplicity-free standard triangularization. 
It is also worth to mention that there exist kernel operators with continuous kernels that are not trace-class operators
(see \cite[Section 30.6]{La}). \\

\section{The disjoint union of the unit interval and atoms}

In this section we extend Theorem \ref{unit_interval} to the measure space $(X, \mu)$ obtained 
by adding $N \in \bN \cup \{\infty\}$ atoms to the unit interval $[0,1]$. To define this more precisely, 
let $A = \{2, 3, 4, \ldots, N+1\}$ if $N \in \bN$, and $A = \bN \setminus \{1\}$ if $N = \infty$. 
We assume that $\mu$ is a Borel measure on $[0,\infty)$ such that
 its support is $X = [0,1] \cup A$, the restriction of $\mu$ to $[0,1]$ is the Lebesgue measure, and 
$\{j\}$  is an atom of measure $1$ for each $j \in A$.
Clearly, the Hilbert space $L^2(X, \mu)$ is the direct sum of $L^2[0,1]$ and $l^2(A)$. 
Let $P_C$ denote the standard projection corresponding to the interval $[0, 1]$, 
and let $P_A$ denote the standard projection corresponding to the set $A$. 

The following proposition was actually proved in \cite[Proposition 3.7]{MMR}. We state it here, because in 
\cite[Proposition 3.7]{MMR} the assumption that $K$ has increasing spectrum is missing.

\begin{proposition}
\label{equal_spectrum}
Let $K$ be a compact operator on $L^2(X, \mu)$ having increasing spectrum relative to standard compressions. 
Then the operators $K$ and $P_A K P_A$ have the same non-zero eigenvalues
with the same algebraic multiplicities, while the operator $P_C K P_C$ is quasinilpotent.
\end{proposition}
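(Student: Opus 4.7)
The plan is to prove the two parts of the proposition in turn.

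For the quasinilpotence of $P_C K P_C$, I would apply Theorem~\ref{quasi} to the restricted compact operator $\tilde K := P_C K P_C|_{\ran P_C}$, viewed on $\ran P_C$, which is naturally identified with $L^2[0,1]$. For every $t \in [0,1]$, the standard projection $P_t$ corresponding to $[0,t]$ is also a standard projection on $L^2(X,\mu)$ and satisfies $P_t \le P_C$, so $P_t \tilde K P_t = P_t K P_t$. Applying the increasing-spectrum hypothesis to $P_t \le P_C$ yields $\sigma(P_t K P_t|_{\ran P_t}) \subseteq \sigma(\tilde K)$, which is precisely the hypothesis required by Theorem~\ref{quasi}. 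Hence $\tilde K$, and consequently $P_C K P_C$, is quasinilpotent.

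For the eigenvalue statement, one inclusion is immediate: applying increasing spectrum to $P_A \le I$ gives $\sigma(P_A K P_A|_{\ran P_A}) \setminus \{0\} \subseteq \sigma(K) \setminus \{0\}$. For the reverse inclusion and for matching algebraic multiplicities, I would work in the block form
$$
K \;=\; \begin{pmatrix} K_{CC} & K_{CA} \\ K_{AC} & K_{AA}\end{pmatrix}
$$
relative to $L^2(X,\mu) = \ran P_C \oplus \ran P_A$. Since $K_{CC}$ is quasinilpotent by the first part, $\lambda I_C - K_{CC}$ is boundedly invertible for every $\lambda \neq 0$, and the Schur factorization reads
$$
\lambda I - K \;=\; L(\lambda) \, \mathrm{diag}\bigl(\lambda I_C - K_{CC},\; S(\lambda)\bigr) \, R(\lambda),
$$
where $L(\lambda)$ and $R(\lambda)$ are block-triangular operators with identity diagonal---hence holomorphic and invertible in $\lambda$ for $\lambda \ne 0$---and
$$
S(\lambda) \;:=\; (\lambda I_A - K_{AA}) - K_{AC}\bigl(\lambda I_C - K_{CC}\bigr)^{-1} K_{CA}
$$
acts on $\ran P_A$. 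By the Gohberg--Sigal theory of equivalent analytic operator-valued functions, this identifies $\sigma(K)\setminus\{0\}$ with the set of $\lambda \ne 0$ at which $S(\lambda)$ is not invertible, and matches algebraic multiplicities of eigenvalues of $K$ with total multiplicities of characteristic values of $S(\lambda)$.

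The main remaining step is to show that the perturbation
$$
F(\lambda) \;:=\; K_{AC}\bigl(\lambda I_C - K_{CC}\bigr)^{-1} K_{CA}
$$
vanishes identically, for then $S(\lambda) = \lambda I_A - K_{AA}$ is linear in $\lambda$ and both the spectral equality and the multiplicity equality drop out immediately. Expanding in a Neumann series at $\lambda = \infty$, the identity $F \equiv 0$ is equivalent to the algebraic relations
$$
K_{AC} K_{CC}^{n} K_{CA} \;=\; 0 \qquad (n \ge 0).
$$
The principal obstacle is establishing these, and the plan is a cycle-vanishing argument modelled on the proof of Theorem~\ref{unit_interval}: for every finite $A' \subseteq A$ and measurable $E \subseteq [0,1]$, the compression $(P_{A'}+P_E) K (P_{A'}+P_E)|_{\ran(P_{A'}+P_E)}$ is compact with spectrum contained in $\sigma(K)$ (by increasing spectrum), while its ``$(E,E)$''-block $P_E K P_E|_{\ran P_E}$ is quasinilpotent (by the first paragraph's argument applied to $E$). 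A finite-dimensional trace computation on $\tr\bigl(P_{A'} ((P_{A'}+P_E)K(P_{A'}+P_E))^m P_{A'}\bigr)$, combined with the quasinilpotence of the $(E,E)$-block, should isolate the cycle contribution $K_{AC} K_{CC}^n K_{CA}$, and varying $E$ through arbitrary measurable subsets of $[0,1]$ should drive it to zero, completing the proof.
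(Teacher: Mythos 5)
Your first paragraph is fine: restricting to $\ran P_C$ and invoking Theorem~\ref{quasi} is exactly the right way to obtain quasinilpotence of $P_C K P_C$, and the easy inclusion $\sigma(P_AKP_A|_{\ran P_A})\subseteq\sigma(K)$ is correct. For the record, the paper does not reprove this proposition at all; it cites \cite[Proposition 3.7]{MMR}, whose argument is a spectral-continuity one: the path $t\mapsto (P_A+P_t)K(P_A+P_t)$ is norm-continuous, all of its spectra lie in the countable set $\sigma(K)$ by the increasing-spectrum hypothesis, and Newburgh's theorem then forces the Riesz projection, hence the algebraic multiplicity, of each non-zero eigenvalue to be constant along the path from $K$ down to $P_AKP_A$.

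The Schur-complement reduction itself is sound, but the step you single out as ``the main remaining step'' is not merely unproved --- it is false. The identities $K_{AC}K_{CC}^{n}K_{CA}=0$ do not follow from the hypotheses. Take $X=[0,1]\cup\{2,3\}$ and let $K$ be the rank-two operator with $K\delta_2=\chi_{[0,1]}$, \ $Kf=\bigl(\int_0^1 f\bigr)\,\delta_3$ for $f\in L^2[0,1]$, and $K\delta_3=0$. Every standard compression of $K$ is nilpotent of order at most $3$ (the ``flow'' $\delta_2\to L^2[0,1]\to\delta_3$ is strictly ordered), so $K$ has increasing spectrum relative to standard compressions; yet $K_{AC}K_{CA}\delta_2=P_AK\chi_{[0,1]}=\delta_3\neq 0$. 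In this example $F(\lambda)=\lambda^{-1}\,\delta_3\delta_2^{\,t}$ is a non-zero nilpotent perturbation, so the conclusion of the proposition still holds, but only because $S(\lambda)$ happens to have the same characteristic values, with the same multiplicities, as $\lambda I_A-K_{AA}$ --- which is precisely what you need to prove and which your plan would have to establish without the crutch $F\equiv 0$. Consequently the closing paragraph (``a finite-dimensional trace computation \dots should drive it to zero'') cannot succeed as described: showing that the generally non-zero perturbation $F(\lambda)$ does not move characteristic values or their multiplicities is essentially the whole difficulty, and the continuity argument of \cite{MMR} is the standard way around it.
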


Let $K$ be a trace-class kernel operator on $L^2(X, \mu)$ with a continuous kernel $k  : X \times X \to \bC$. Then 
$$ \tr (K) = \tr(P_C K P_C) + \tr(P_A K P_A) =  \int_0^1 k(x,x) dx + \sum_{j=2}^{N+1} k(j,j) = 
\int_X k(x,x) d\mu(x) . $$

Similarly as before,  we say that a kernel operator $K$ on $L^2(X, \mu)$
with a continuous kernel $k$ admits a {\it non-degenerate cycle of length} $n \ge 2$ 
if there exist $n$ pairwise distinct numbers $x_1$, $x_2$, $\ldots$, $x_n$ in $X$ such that
$$ k(x_1, x_2) k(x_2, x_3) k(x_3, x_4) \cdots k(x_{n-1}, x_n) k(x_n, x_1)  \neq 0 . $$

The following theorem extends Theorem \ref{unit_interval}, and it gives a more general answer to Question \ref{open}.

\begin{theorem}
\label{main}
Let $K$ be a trace-class kernel operator on $L^2(X, \mu)$ with a continuous kernel $k$. 
Suppose that $K$ has increasing spectrum relative to standard compressions and that its modulus $|K|$ is also a trace-class 
operator. Then $K$ and $|K|$ admit a (common) standard triangularization.
Furthermore, the operators $K$ and $P_A K P_A$ have the same non-zero eigenvalues
with the same algebraic multiplicities. This holds also for the operators $|K|$ and $P_A |K| P_A$,
while the operators $P_C K P_C$ and $P_C |K| P_C$ are both quasinilpotent.
\end{theorem}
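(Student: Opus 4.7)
The plan is to mirror the argument of Theorem \ref{unit_interval}, adapting the cycle-vanishing step to the presence of atoms. Proposition \ref{equal_spectrum} applied to $K$ immediately gives that $P_C K P_C$ is quasinilpotent and that $K$ and $P_A K P_A$ share the same non-zero eigenvalues with the same algebraic multiplicities. Next, $P_C K P_C$ satisfies the hypotheses of Theorem \ref{unit_interval} as an operator on $L^2[0,1]$: it is trace-class with continuous kernel $k|_{[0,1] \times [0,1]}$, its lattice modulus is $P_C |K| P_C$ (trace-class as a compression of $|K|$), and it inherits increasing spectrum because every standard projection on $L^2[0,1]$ extends by zero to a standard projection on $L^2(X, \mu)$. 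Theorem \ref{unit_interval} then yields that both $P_C K P_C$ and $P_C |K| P_C$ are quasinilpotent, admit a common standard triangularization of $L^2[0,1]$, have vanishing diagonal on $[0,1]$, and admit no non-degenerate cycle whose vertices all lie in $[0,1]$.

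The crux is to prove by induction on $n \geq 2$ that $K$ admits no non-degenerate cycle $(x_1, \ldots, x_n)$ with at least one $x_i \in [0,1]$. For $n = 2$, the remaining case $x_1 \in [0,1]$, $x_2 = j \in A$ is handled by taking $E_r = [x_1, x_1+r] \cup \{j\}$ and using Proposition \ref{equal_spectrum} applied to $P_{E_r} K P_{E_r}$ on $L^2(E_r)$: the latter inherits increasing spectrum from $K$, so $\tr((P_{E_r} K P_{E_r})^2) = k(j,j)^2$, and expanding the trace, subtracting the atomic contribution, dividing by $r$ and letting $r \to 0$ gives $k(x_1, j) k(j, x_1) = 0$. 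For the inductive step, assuming the cycle has $p$ vertices in $[0,1]$, I take independent parameters $r_1, \ldots, r_p > 0$ and form the analogous compression $T_{\vec r}$ of $K$; the identity $\tr(T_{\vec r}^n) = \tr((P_{E_{\vec r} \cap A} K P_{E_{\vec r} \cap A})^n)$ is identically zero after subtracting the $\vec r$-independent atomic part, and the mixed partial $\partial_{r_1} \cdots \partial_{r_p}$ at $\vec r = 0$ (well-defined by mere continuity of $k$) equals $\sum_{\vec a} k(x_{a_1}, x_{a_2}) \cdots k(x_{a_n}, x_{a_1})$, summed over sequences $\vec a \in \{1, \ldots, n\}^n$ in which each continuous index appears exactly once. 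Sequences with an atomic repeat decompose at that atom into two sub-cycles, one of which contains a continuous vertex and vanishes by the inductive hypothesis; the remaining contributors are permutations of $\{1, \ldots, n\}$. Applying the inductive hypothesis to closing shorter cycles then forces $k(x_i, x_j) = 0$ for $j \geq i+2$ and $k(x_j, x_i) = 0$ whenever $i \leq p$ and $j - i \leq n - 2$, leaving only the $n$ cyclic rotations of $(1, 2, \ldots, n)$ as valid permutations; the mixed partial identity becomes $n \cdot k(x_1, x_2) \cdots k(x_n, x_1) = 0$, contradicting non-degeneracy.

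With the no-cycle property established, any $(t_1, \ldots, t_n) \in X^n$ having some $t_i \in [0,1]$ forces $k(t_1, t_2) \cdots k(t_n, t_1) = 0$ (decomposing repeats into elementary sub-cycles and using $k(x, x) = 0$ on $[0,1]$ for the length-one case), so $\tr(|K|^n) = \tr((P_A |K| P_A)^n)$ for every $n$, and Lidskii's theorem gives the equality of non-zero eigenvalues with multiplicities for $|K|$ and $P_A |K| P_A$. The common standard triangularization on $L^2(X, \mu)$ is constructed from the kernel support digraph of $|K|$ (equivalently of $K$, since $k$ and $|k|$ share the same zero set): its condensed DAG has each continuous point as a singleton component and atoms possibly forming larger components, and any linear extension of this acyclic order---refined within $[0,1]$ by the de Pagter chain for $P_C |K| P_C$ and interleaved with the atoms at positions where each atom's forward support is already present while its backward support has not yet been reached---yields a maximal chain of forward-closed standard subspaces invariant under both $K$ and $|K|$. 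The main technical obstacle is the combinatorial reduction in the inductive step, namely systematically excluding competing Hamiltonian cycles in the valid-edge digraph to isolate the cyclic rotations of $(x_1, \ldots, x_n)$.
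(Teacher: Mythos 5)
Your overall strategy is the same as the paper's: trace identities for compressions to small sets (short intervals around continuous points, singletons at atoms) force every cyclic product of the kernel passing through $[0,1]$ to vanish, and one then triangularizes. Two steps, however, are genuinely incomplete. First, the inductive step: you yourself flag the ``combinatorial reduction'' as the main unresolved obstacle, and as you have set it up it really is one. Your induction hypothesis excludes only cycles containing a continuous vertex, so when you close a sub-path $x_i\to\cdots\to x_j$ of the putative cycle to deduce $k(x_j,x_i)=0$, you cannot do so when $x_i,\ldots,x_j$ are all atoms; this is why your list of vanishing edges carries the restriction $i\le p$, and that list is too weak to rule out competing Hamiltonian cyclic sequences built from atomic back-edges. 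The missing ingredient is that purely atomic non-degenerate cycles are impossible, because Theorem \ref{discrete} gives a multiplicity-free standard triangularization of $P_AKP_A$; together with Theorem \ref{unit_interval} for purely continuous cycles, every cycle of length $2\le \ell\le n-1$ is then degenerate, whence $k(x_j,x_i)=0$ for all $i<j$ with $j-i\le n-2$. A surviving Hamiltonian cyclic sequence must then use the only remaining back-edge $n\to 1$, and the complementary forward Hamiltonian path from $1$ to $n$ is forced to be $1\to 2\to\cdots\to n$, so only the $n$ rotations of $(1,\ldots,n)$ survive and one gets $n\,k(x_1,x_2)\cdots k(x_n,x_1)=0$. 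This is exactly the paper's argument, phrased there as a minimal counterexample with a single radius $r$ and division by $r^m$ rather than your independent radii and mixed partials (the two devices are equivalent).

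Second, your construction of the common triangularizing chain from the condensed DAG of the support digraph, ``refined by the de Pagter chain on $[0,1]$ and interleaved with the atoms,'' is only a sketch: maximality of the chain among invariant standard subspaces and almost-everywhere invariance are not established, and you do not rule out nontrivial strongly connected atomic components (again Theorem \ref{discrete} is needed). The paper sidesteps all of this with a cleaner device: let $D$ be the diagonal atomic part of the kernel and $G=K-D$; the no-cycle property gives that every cyclic product of the kernel of $G$ vanishes, hence $\tr(|G|^n)=0$ for all $n$, so $|G|$ is a positive quasinilpotent trace-class operator to which (the triangularization corollary of) de Pagter's theorem applies directly, and since $D$ commutes with every standard projection the resulting chain is invariant under $K=G+D$ and $|K|=|G|+D$. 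The remaining parts of your proposal (the $n=2$ mixed case, the use of Proposition \ref{equal_spectrum}, and deducing the eigenvalue statement for $|K|$ from $\tr(|K|^n)=\tr((P_A|K|P_A)^n)$) are sound and match the paper.
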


\begin{proof}
We intend to modify the proof of Theorem \ref{unit_interval}.
We still have  $k(x, x) =  0$ for all $x \in [0,1]$, but $\{k(j,j):  j \in A\}$ contains the set of all non-zero eigenvalues of 
$K$ or $P_A K P_A$, by Proposition \ref{equal_spectrum} and Theorem \ref{discrete}.

We claim that $K$ does not admit any non-degenerate cycles of length $2$. In view of Theorem \ref{discrete} 
and Theorem \ref{unit_interval} it is enough to show that $k(x,j) k(j,x) = 0$ for every $x \in [0,1]$ and every $j \in A$.
To this end, let $P_{x, r}$ be the standard projection corresponding to the interval $[x, x+r]$ for some $r \in (0,1)$,
and let $Q_j$ be the standard projection corresponding to the atom $\{j\}$.
If the operators  $(P_{x, r}+Q_j) K (P_{x, r}+Q_j)$ and  $Q_j K Q_j$ are not both quasinilpotent,
they have only one non-zero eigenvalue (that is equal to $k(j,j)$).
Furthermore, $P_{x, r} K P_{x, r}$ is a quasinilpotent trace-class kernel operator, and so we have 
$$ 0 = \tr (((P_{x, r}+Q_j) K (P_{x, r}+Q_j))^2) - \tr ((Q_j K Q_j)^2)  = $$
$$ =  \tr ((P_{x, r}+Q_j) K (P_{x, r}+Q_j) K ) - \tr (Q_j K Q_j K)  = $$
$$ = \tr ((P_{x, r} K P_{x, r})^2) + \tr (P_{x, r} K Q_j K) + \tr (Q_j K P_{x, r} K ) =  $$
$$ = 2 \cdot \tr (P_{x, r} K Q_j K )  = 2 \cdot \int_x^{x+r} k(t,j) k(j,t) dt . $$
Dividing by $r$ and letting $r$ tend to 0 we conclude that $k(x,j) k(j,x) = 0$ as claimed.

Let us prove that $K$ does not admit any non-degenerate cycles of length $n$ for any $n \ge 3$.
Assuming the contrary, let $n \ge 3$ be the smallest positive integer for which $K$ admits a non-degenerate cycle of length $n$.
Let  $x_1$, $x_2$, $\ldots$, $x_n$ be pairwise distinct numbers in $X$ such that
$$ k(x_1, x_2) k(x_2, x_3) k(x_3, x_4) \cdots k(x_{n-1}, x_n) k(x_n, x_1)  \neq 0 . $$
As in  the proof of Theorem \ref{unit_interval}, we have $k(x_j, x_i) = 0$ if $1 \le i < j \le n$ with $j-i\le n-2$, 
since 
$$ k(x_i, x_{i+1}) k(x_{i+1}, x_{i+2}) \cdots k(x_{j-1}, x_j)  \neq 0 $$
and 
$$ k(x_i, x_{i+1}) k(x_{i+1}, x_{i+2}) \cdots k(x_{j-1}, x_j)   k(x_j, x_i) = 0 . $$
Let $m$ be the number of $x_1$, $x_2$, $\ldots$, $x_n$ belonging to $[0,1]$.
In view of  Theorem \ref{discrete}  and Theorem \ref{unit_interval} we can assume that $1 \le m \le n-1$.

Let $r \in (0,1)$ be small enough so that  $\{[x_i, x_i+r]\}_{i=1}^n$ are pairwise disjoint intervals.
Then the standard projections $P_i := P_{x_i, r}$ ($i=1,2, \ldots, n$) are pairwise orthogonal. 
Put $P :=  P_1 +\ldots + P_n$ and  $K_{i, j} := P_i K P_j$.
Now, we have 
$$ \tr \left( (P KP)^n  \right) =
\tr \left( \left( \sum_{i=1}^n \sum_{j=1}^n K_{i, j} \right)^{\! n}  \right) = 
$$
\begin{equation}
\label{trace_sum}
 = \sum_{i_1,\ldots,i_n=1}^n \tr (K_{i_1, i_2} K_{i_2, i_3} \cdots K_{i_{n-1}, i_n} K_{i_n, i_1}) .
\end{equation}
The last trace is equal to the integral 
$$ \int_{x_{i_1}}^{x_{i_1}+r}  d\mu(t_1) \int_{x_{i_2}}^{x_{i_2}+r} d\mu(t_2) \cdots  \int_{x_{i_n}}^{x_{i_n}+r} 
k(t_1, t_2) \, k(t_2, t_3) \, k(t_3, t_4) \cdots k(t_{n-1}, t_n) \, k(t_n, t_1) \,  d\mu(t_n) , $$
which can be non-zero only in two cases:

(a) $i_1 = i_2 = \ldots = i_n$ and $x_{i_1} = j$ for some $j \in A$, in which case it is equal to $k(j,j)^n$;

(b) $(i_1, i_2, \ldots, i_n)$ is a permutation of $(1, 2, \ldots, n)$ that is an $n$-cycle (i.e., a cycle of maximal length). \\
If $C_n$ denotes the set of all $n$-cycles, it follows from (\ref{trace_sum}) that 
\begin{equation}
\label{trace_sum1}
\tr \left( (P KP)^n  \right) = \sum_{j \in \{x_1, \ldots, x_n\} \cap A} k(j,j)^n + 
\sum_{(i_1,\ldots,i_n) \in C_n} \tr (K_{i_1, i_2} K_{i_2, i_3} \cdots K_{i_{n-1}, i_n} K_{i_n, i_1})  .
\end{equation}

By Proposition \ref{equal_spectrum}, the operators $P K P$ and  $P_A P K P P_A$  have the same non-zero eigenvalues
with the same algebraic multiplicities, and these eigenvalues are equal to the set 
$\{k(j,j): j \in \{x_1, \ldots, x_n\} \cap A\} \setminus  \{0\}$ by  Theorem \ref{discrete}.
Therefore, we obtain  from (\ref{trace_sum1}) that 
$$ 0 = \sum_{(i_1,\ldots,i_n) \in C_n} \tr (K_{i_1, i_2} K_{i_2, i_3} \cdots K_{i_{n-1}, i_n} K_{i_n, i_1})  . $$
Dividing by $r^m$ and letting $r$ tend to 0 we conclude that
$$ 0 =   \sum_{(i_1,\ldots,i_n) \in C_n}  k(x_{i_1}, x_{i_2}) k(x_{i_2}, x_{i_3}) \cdots 
k(x_{i_{n-1}}, x_{i_n}) k(x_{i_n}, x_{i_1})  . $$
Since $k(x_j, x_i) = 0$ for all $i$, $j$ with $1 \le i \le j \le n$ and $j-i\le n-2$, we finally obtain that 
$$ 0 = n \, k(x_1, x_2) k(x_2, x_3) k(x_3, x_4) \cdots k(x_{n-1}, x_n) k(x_n, x_1) .$$
This contradiction shows that $K$ does not admit any non-degenerate cycles.

Let $D$ be the (diagonal) kernel operator on $L^2(X, \mu)$ with the kernel obtained from $k$ by multiplication 
with the characteristic function of the set $\{(j,j) : j \in A\}$. 
Thus, the kernel $g$ of the kernel operator $G = K - D$ has all diagonal entries equal zero, and 
for every $n \in \bN$ and for every numbers  $x_1$, $x_2$, $\ldots$, $x_n$ in $X$,  we have
$$ g(x_1, x_2) g(x_2, x_3) \cdots g(x_{n-1}, x_n) g(x_n, x_1)  = 0 . $$
The modulus $|G|$ of $G$ has kernel $|g|$, so that the kernel of $|G|^n$ at point $(x,x) \in X \times X$
 is equal to the integral 
$$ \int_X d\mu(t_1) \int_X d\mu(t_2) \cdots  \int_X |g(x, t_1)| \, |g(t_1, t_2)| \, |g(t_2, t_3)| \cdots 
|g(t_{n-1}, x)| \,  d\mu(t_{n-1}) = 0 . $$
Therefore, $\tr (|G|^n)  = 0$ for all $n \in \bN$. By \cite[Theorem 14 in Chapter 30]{La}),
$|G|$ is quasinilpotent trace-class operator. 
It follows that $|G|$ admits a standard triangularization, by (a corollary to) de Pagter's theorem.
Clearly, the same family of standard subspaces is also invariant under  $|K| = |G| + D$ and $K = G+D$. 
The rest assertions of the theorem  follow from Proposition \ref{equal_spectrum}.
\end{proof}

\section{Finite-rank operators}

Throughout this section, we assume again that $\mu$ is a positive measure on a set $X$  
such that $L^2(X, \mu)$  is a separable complex Hilbert space.
Lemma 3.14 and Proposition 3.15 in \cite{MMR} are the key assertions for the proof of Theorem \ref{finite_rank_MMR}.
However, the conclusion in the last line of the proof of \cite[Lemma 3.14]{MMR} is not established, 
and so we first give a more convincing proof of \cite[Lemma 3.14 and Proposition 3.15]{MMR}.
We then slightly improve Theorem \ref{finite_rank_MMR}.

\begin{proposition}
\label{zero_row}
Let $K$ be a finite-rank operator on $L^2(X, \mu)$ with nilpotent standard compressions.
Then there exist non-zero standard projections $Q$ and $R$ on $L^2(X, \mu)$ such that 
$$ Q K = 0 \ \ \ \textrm{and} \ \ \ K R = 0 . $$
\end{proposition}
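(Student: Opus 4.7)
By the $K \leftrightarrow K^*$ symmetry (since $PK^*P=(PKP)^*$ is nilpotent whenever $PKP$ is), it suffices to produce a single non-zero standard projection $Q$ with $QK=0$; the analogous $R$ for $K$ is then obtained by applying the statement to $K^*$ and taking adjoints. The rank-one case is instructive: writing $K=f\otimes g$, each compression $PKP=(Pf)\otimes(Pg)$ is nilpotent iff $\langle Pf,Pg\rangle=0$; requiring this for every $P=\chi_E$ forces $f\overline g=0$ a.e., and (if $K\ne 0$) the set $\{f=0\}$ has positive measure, so $Q=\chi_{\{f=0\}}$ suffices.

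For general rank $k\ge 2$, I fix a rank decomposition $K=\sum_{i=1}^{k} f_i\otimes g_i$ with $\{f_i\}$, $\{g_i\}$ linearly independent in $L^2(X,\mu)$. The nonzero spectrum of $PKP$ coincides with that of the $k\times k$ matrix
\[
M_E:=\Bigl[\int_E f_j\,\overline{g_i}\,d\mu\Bigr]_{i,j=1}^{k},
\]
so the hypothesis is equivalent to nilpotency of $M_E$ for every measurable $E$. Polarising the trace identities $\tr(M_E^n)=0$ over disjoint unions and invoking Lebesgue differentiation (as in the proof of Theorem~\ref{unit_interval}) produces the pointwise cycle identities $k(x_1,x_2)\,k(x_2,x_3)\cdots k(x_n,x_1)=0$ for every $n\ge 1$ and almost every tuple, with $k(x,y):=\sum_i f_i(x)\overline{g_i(y)}$. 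The sought-after $Q$ is $\chi_Z$ where $Z:=\bigcap_{i=1}^{k}\{f_i=0\}$, since then $\ran K\subseteq L^2(X\setminus Z)$ and $QK=0$; the task reduces to showing $\mu(Z)>0$.

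\paragraph{Main obstacle.}
Translating the a.e.\ cycle vanishings into the positive-measure conclusion for $Z$ is the core technical step --- the cycle conditions alone are compatible with Volterra-type kernels having full-support range, so the finite rank of $K$ must be exploited. My plan is to view the matrix-valued measure $\nu(E)=M_E$, with rank-at-most-one density $R(x)=\overline{\mathbf g(x)}\mathbf f(x)^T$, as taking values in the nilpotent variety $\mathcal{N}_k\subset M_k(\bC)$, and to use Lebesgue differentiation at multi-point sample measures to show that every finite sum $\sum_{j=1}^{n} R(x_j)$ lies in $\mathcal{N}_k$ for a.e.\ tuples. I would then attempt a Levitzki/Burnside-style simultaneous triangularisation argument to extract a common invariant proper subspace $W$ of $\bC^k$ for $\{R(x):x\in X\}$; in a basis making $W$ a coordinate subspace, some of the $f_i$'s must vanish on a common positive-measure set and so produce $Z$. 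The main hurdle I anticipate is adapting the classical Levitzki statement --- formulated for algebras of nilpotent matrices --- to our setting, where the relevant family is closed only under addition along disjoint unions; pinning down the measurable structure so that $W$ actually pulls back to a standard subspace of $L^2(X,\mu)$ is the final subtlety.
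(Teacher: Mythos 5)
Your opening reductions are sound and coincide with the paper's: it suffices to produce $Q$ with $QK=0$ (the $R$ comes from $K^*$), the rank-one case is handled the same way, and the finite-dimensional object you isolate --- the matrix $M_E$, nilpotent for every measurable $E$ because its non-zero spectrum is that of $PKP$ --- is exactly the paper's $M(E)=\int_E G(x)F(x)^t\,d\mu(x)$. But the proposal stops precisely at the step that carries all the content, and the route you sketch for closing it has two concrete defects. First, the Lebesgue differentiation you want to import from Theorem \ref{unit_interval} is not available here: in this proposition $(X,\mu)$ is an arbitrary measure space and the $f_i,g_i$ are merely $L^2$ functions, so there are no intervals $[x,x+r]$, no continuity, and no differentiation basis with which to pass from $\tr(M_E^n)=0$ to pointwise cycle identities. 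Second, and more seriously, the Levitzki/Burnside step is not merely ``hard to adapt'' --- it is the wrong tool. Your family $\{R(x)\}$ is closed only under the additive operations coming from disjoint unions, so what you actually control is a \emph{linear space} of nilpotent matrices (the span $\mathcal{V}$ of the $M(E)$'s), and irreducible linear spaces of nilpotent matrices exist; additive closure does not yield a common invariant subspace $W$, so the triangularization you hope to extract need not exist. As you yourself note, the cycle identities alone are consistent with $\mu(Z)=0$, so without a working replacement for this step the proof is incomplete.

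For comparison, the paper never attempts simultaneous triangularization and never needs cycle identities of order higher than two. It runs a minimal-rank-counterexample induction: assuming $n$ is the least rank for which $\mu\bigl((\supp F)^c\bigr)=0$ can occur, it uses only $\tr\bigl(M(E)M(F)\bigr)=0$ for disjoint $E,F$, chooses disjoint sets $E_1,\dots,E_m$ whose images form a basis of the finite-dimensional space $\mathcal{V}$ while leaving a complement of positive measure, and extracts a single point $x_0$ with $\tr\bigl(M(x_0)T\bigr)=0$ for all $T\in\mathcal{V}$ by the elementary fact that a function whose integral vanishes over every measurable subset vanishes a.e.\ (no differentiation needed). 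Conjugating so that $M(x_0)=S^{-1}e_ne_1^tS$ forces the $(1,n)$-entry of every $SM(E)S^{-1}$ to vanish, i.e.\ $\tilde g_1\tilde f_n=0$ a.e. This single zero-product relation is the measurable substitute for your invariant subspace $W$: it splits $X$ into a positive-measure set where $\tilde f_n=0$ and one where $\tilde g_1=0$, the compression to the former has rank at most $n-1$, and minimality of $n$ gives the contradiction. To salvage your argument you would need to supply an equivalent rank-reduction mechanism; the one the paper uses comes from a single linear functional annihilating $\mathcal{V}$, not from any triangularization theorem.
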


\begin{proof}
If the space $L^2(X, \mu)$ is finite-dimensional, this proposition follows easily from Theorem \ref{discrete}.
So, assume that the space $L^2(X, \mu)$ is infinite-dimensional.
It suffices to prove the existence of a non-zero standard projection $Q$ such that $Q K = 0$, 
as then this can be applied for the adjoint operator $K^*$ to obtain a non-zero standard projection $R$ such that $R K^* = 0$,
implying that  $K R = 0$.
 
Let $n$ be the rank of $K$. We may assume that $K \neq 0$, so that $n \ge 1$.
The operator $K$ can be viewed as a kernel operator whose kernel is 
$$ k(x,y) = \sum_{i=1}^n f_i (x) g_i(y) , $$
where each of the sets $\{f_i\}_{i=1}^n$ and $\{g_i\}_{i=1}^n$ is linearly independent in $L^2(X, \mu)$.
Then 
$$ K f = \sum_{i=1}^n \langle f, \overline{g}_i \rangle f_i $$
for all $f \in L^2(X, \mu)$. Note that $K$ is a trace-class operator on $L^2(X, \mu)$, and its trace is equal to
$\tr(K) = \int_X k(x,x) \, d\mu(x)$. To show the latter, we can assume that $\{f_i\}_{i=1}^n$ is an orthonormal basis
for the range of $K$, and so we have 
$$ \tr(K) = \sum_{j=1}^n \langle K f_j, f_j \rangle =  \sum_{j=1}^n \sum_{i=1}^n 
\langle f_j, \overline{g}_i \rangle  \langle f_i, f_j \rangle = 
\sum_{i=1}^n \langle f_i, \overline{g}_i \rangle  = \int_X k(x,x) \, d\mu(x) . $$

Let $P$ be the standard projection corresponding to a measurable subset $E$ of $X$. 
Since the operator $P K P$ is a nilpotent kernel operator on $L^2(X, \mu)$, we have 
$$   0 = \tr (P K P) =  \int_E k(x,x) \, d\mu(x) .  $$
This holds for any measurable subset $E$ of $X$,  so that $k(x,x) = 0$ for almost every $x \in X$. 
It is easy to verify that we may henceforth assume that $k(x,x) = 0$ for all $x \in X$, 
by suitably redefining the functions $\{g_i\}_{i=1}^n$ on sets of measure zero.

Define two maps from $X$ to $\bC^n$ by
$$ F(x) = \left[ \begin{matrix}
f_1(x) & f_2(x)  & \cdots  & f_n(x) 
\end{matrix}  \right]^t  \ \ \ \textrm{and} \ \ \
 G(x) = \left[ \begin{matrix}
g_1(x) & g_2(x)  & \cdots  & g_n(x) 
\end{matrix}  \right]^t  . $$
Then $k(x, y) = F(x)^t G(y)$ for all $x$ and $y$.
Let us denote the support of a function $f : X \to \bC$ by
$$ \supp (f) := \{x \in X : f(x) \neq 0 \} , $$
and define 
$$ \supp (F)  = \bigcup_{i=1}^{n} \supp (f_i) \ \ \ \textrm{and} \ \ 
\supp (G)  = \bigcup_{i=1}^{n} \supp (g_i) \ . $$ 
We must show that there is a non-zero standard projection $Q$ on $L^2(X, \mu)$ such that 
$Q f_i = 0$ for all $i=1, 2, \ldots, n$, or equivalently, the complement 
$(\supp (F))^c$ of $\supp (F)$ has positive measure, as we can then take $Q$ to be 
the standard projection corresponding to this complement.
For $n=1$ this easily follows from the facts that $0 = k(x,x) = f_1 (x) g_1(x)$ for all $x$, and 
$\mu(\supp (f_1)) > 0$ and $\mu(\supp (g_1)) > 0$.
Assume now that $K$ has the smallest rank $n \ge 2$ among all operators 
satisfying the assumptions of the proposition and having the property that $\mu((\supp (F))^c) = 0$.
We can assume that $\supp (G) = X$, since, otherwise, we can replace the operator $K$
with the operator $P_G KP_G$ (considered as an operator on ${\ran P_G}$), 
where  $P_G$ is the standard projection  corresponding to $\supp (G)$.

Define an $n \times n$ matrix of functions on $X$ by 
$$ M(x) = G(x) F(x)^t . $$
Since $F(x)^t G(x) = k(x,x) = 0$, $M(x)$ is a nilpotent rank-one matrix.
For any measurable subset $E$ of $X$ let us define
$$ M(E) = \int_E M(x) \, d\mu(x) . $$
Let $P$ be the standard projection corresponding to a measurable set $E$.
As the operator $P K P$ is a nilpotent finite-rank kernel operator on $L^2(X, \mu)$,
we have
\begin{equation}
\label{trace_zero} 
0 = \tr ((P K P)^2) = \int_E d\mu(x) \int_E k(x,y) k(y,x) d\mu(y) . 
\end{equation}
Since 
$$ \tr(M(x) M(y)) = \tr (G(x) F(x)^t G(y) F(y)^t) =  F(x)^t G(y)  \, \tr (G(x) F(y)^t) = $$
$$  = k(x,y) F(y)^t G(x) =k(x,y) k(y,x) , $$
we obtain that 
\begin{equation}
\label{trace_square} 
\tr ((M(E))^2) = \tr \left( \left( \int_E M(x) \, d\mu(x) \right) \left( \int_E M(y) \, d\mu(y) \right) \right) = 
\end{equation}
$$  \int_E d\mu(x) \int_E \tr (M(x) M(y)) \, d\mu(y) = 
\int_E d\mu(x) \int_E k(x,y) k(y,x) d\mu(y) = 0 ,  $$
by (\ref{trace_zero}).
Now, let $E$ and $F$ be disjoint measurable sets. Since 
$$ \tr ((M(E \cup F))^2) = \tr ((M(E) + M(F))^2) = $$
$$ = \tr ((M(E))^2) + \tr ((M(F))^2) + 2 \, \tr (M(E) M(F)) , $$
(\ref{trace_square}) gives that $\tr (M(E) M(F)) = 0$.

Let $\cV$ be the linear span of the set $\{M(E): E \ {\rm measurable \ set}\}$, and let $m$ be its dimension. Clearly, $m \le n^2$.  Pick measurable sets $E_1$, $E_2$, \ldots, $E_m$ such that 
$M(E_1)$, $M(E_2)$, \ldots, $M(E_m)$ is a basis for $\cV$. As the set function $M$ is finitely additive, we may assume that the sets $E_1$, $E_2$, \ldots, $E_m$ are pairwise disjoint.
Put $E_0 = \cup_{i=1}^m E_i$. Let us show that we can also assume that $\mu(E_0^c) > 0$.
If $\mu(E_0^c) > 0$, we are done, and if $\mu(E_0^c) = 0$, then at least one of the sets $E_1$, $E_2$, \ldots, $E_m$, 
say $E_1$, can be decomposed into two disjoint measurable sets $F_1$ and $F_2$ of positive measure, since the space $L^2(X, \mu)$ is infinite-dimensional.
For $j \in \{1, 2\}$,  let  $\cV_j$ be the linear span of the set 
$\{M(F_j)\} \cup \{M(E_k) : k =2, 3, \ldots, m\}$.   
If the dimensions of $\cV_1$ and $\cV_2$ were both smaller than $m$, then 
$M(F_1)$ and $M(F_2)$ would be linear combinations of matrices $M(E_2)$, $M(E_3)$,\ldots, $M(E_m)$, and 
so $M(E_1) = M(F_1) + M(F_2)$ would also be a linear combination of the same matrices. 
This would contradict the choice of the sets $E_1$, $E_2$, \ldots, $E_m$, and so we must have 
that $\cV_j = \cV$ for some $j \in \{1, 2\}$. Therefore, we can replace $E_1$ by $F_j$, showing that we can assume that  $\mu(E_0^c) > 0$.

Since for every $k \in \{1, \ldots, m\}$ and for every measurable subset $F \subseteq E_0^c$ it holds that
$$ 0 = \tr (M(F) M(E_k)) = \int_F \tr (M(x) M(E_k)) \, d\mu(x)  , $$ 
for almost every $x \in E_0^c$ we have $\tr (M(x) M(E_k)) = 0$ for all  $k \in \{1, \ldots, m\}$.
Therefore, we can find an $x_0 \in E_0^c \cap \supp (F)$ such that 
$\tr (M(x_0) M(E_k)) = 0$ for all  $k \in \{1, \ldots, m\}$.
By linearity of the trace, we have $\tr (M(x_0) \,T) = 0$ for all $T \in \cV$.
Since $M(x_0) = G(x_0) F(x_0)^t$ is a rank-one nilpotent matrix, we can find an invertible $n \times n$ matrix $S$ such that 
$M(x_0) = S^{-1} e_n e_1^t S$, where $e_1$, $e_2$, $\ldots$, $e_n$ are the standard basis vectors of $\bC^n$.
Therefore, $0 = \tr (S^{-1} e_n e_1^t S \,T) =\tr (e_n e_1^t S \,T S^{-1}) = e_1^t (S \,T S^{-1}) e_n$, that is, 
the $(1,n)$-entry of  $S \,T S^{-1}$ is zero for all $T \in \cV$.
Now, define two maps from $X$ to $\bC^n$ by
$$ \tilde{F}(x)^t = \left[ \begin{matrix}
\tilde{f}_1(x) & \tilde{f}_2(x)  & \cdots  & \tilde{f}_n(x)
\end{matrix}  \right]  := F(x)^t S^{-1}  \ \ \ \textrm{and}  $$ 
$$ \tilde{G}(x) = \left[ \begin{matrix}
\tilde{g}_1(x) & \tilde{g}_2(x)  & \cdots  & \tilde{g}_n(x)
\end{matrix}  \right]^t  := S \, G(x) . $$ 
If $T = M(E) \in \cV$ for arbitrary measurable set $E$, then 
$$ S T S^{-1} =  \int_E S G(x) F(x)^t S^{-1}  \, d\mu(x) =  \int_E \tilde{G} (x) \tilde{F}(x)^t  \, d\mu(x) , $$
and so the $(1,n)$-entry of $\tilde{G} (x) \tilde{F}(x)^t$ is zero for almost all $x \in X$,
that is, $\tilde{g}_1 (x) \tilde{f}_n(x) = 0$ for almost all $x \in X$. 
Since $\tilde{F}(x)^t \tilde{G} (x) = F(x)^t S^{-1}  S \, G(x) = k(x, y)$, we may therefore assume already 
for the original functions  that $g_1 (x) f_n(x) = 0$ for almost all $x \in X$. 
Since the functions $f_n$ and $g_1$ are non-zero, there exists a measurable set $E$ such that $\mu(E) > 0$,  
$\mu(E^c) > 0$, $f_n = 0$ on $E$ and $g_1 = 0$ a.e. on $E^c$.
If  $P$ is  the standard projection corresponding to $E$, then the operator $PKP$ has kernel 
$$ k_P(x,y) = \sum_{i=1}^{n-1} f_i (x) g_i(y) , $$
and it is a finite-rank operator on $L^2(E, \mu|_E)$ with nilpotent standard compressions.
By the choice of $n$, there is a set $A \subseteq E$ of positive measure such that 
$f_i = 0$ on $A$ for all $i=1,2, \ldots, n-1$. Since $f_n = 0$ on $A$ as well, this contradiction completes the proof.
\end{proof}

We now slightly improve Theorem \ref{finite_rank_MMR} that is the main result of \cite{MMR}.
We begin with the case of a nilpotent operator.

\begin{theorem}
\label{nilpotent_finite_rank}
Let $K$ be an operator on $L^2(X, \mu)$ of rank $n \in \bN$. 
If $K$ has nilpotent standard compressions, then there exist a positive integer $m \le n+1$ 
and a partition $\{E_1, E_2, \ldots, E_m\}$  of $X$ such that, 
relative to the decomposition $L^2 (X, \mu) = \bigoplus_{j=1}^m L^2 (E_j, \mu|_{E_j})$,
the operator $K$ has the block-matrix form
$$  
K = \left[ \begin{matrix}  
0 & K_{1, 2} & K_{1, 3} &  K_{1, 4} & \ldots & K_{1, m-1} & K_{1, m} \cr
0 & 0 &  K_{2, 3} & K_{2, 4} & \ldots & K_{2, m-1} & K_{2, m}\cr
0 & 0 & 0 & K_{3, 4} & \ldots & K_{3, m-1} & K_{3, m}\cr
0 & 0 & 0 & 0 & \ddots & K_{4, m-1} & K_{4, m}\cr
\vdots & \vdots & \vdots & \ddots & \ddots & \vdots &\vdots \cr
0 & 0 & 0 & 0 & \ldots & 0 & K_{m-1, m} \cr 
0 & 0 & 0 & 0 & \ldots & 0 & 0
\end{matrix}  \right] ,$$
with $K_{j,j+1} \neq 0$ for all $j=1, 2, \ldots, m-1$.  
\end{theorem}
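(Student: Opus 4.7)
The plan is to build the partition iteratively: at each stage I would apply Proposition \ref{zero_row} to the current compression of $K$ and peel off a maximal measurable set on which a column block of $K$ vanishes. Concretely, set $X_1 := X$ and $K_1 := K$. Having defined disjoint non-empty $E_1, \ldots, E_{j-1}$, put $X_j := X \setminus (E_1 \cup \cdots \cup E_{j-1})$ and $K_j := P_{X_j} K P_{X_j}$; its rank is at most $n$ and its standard compressions coincide with standard compressions of $K$, hence remain nilpotent. If $K_j = 0$ I stop with $m := j$ and $E_m := X_j$; otherwise Proposition \ref{zero_row} supplies a non-zero standard projection $R \le P_{X_j}$ with $K_j R = 0$, and I take $E_j$ to be a measurable subset of $X_j$ of essentially maximal measure satisfying $K_j P_{E_j} = 0$. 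Such a maximum exists because this family of subsets is closed under countable unions and $L^2(X, \mu)$ is separable.

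Two verifications are then needed. First, strict upper triangularity of the block matrix: the identity $K_j P_{E_j} = 0$ reads $P_{X_j} K P_{E_j} = 0$, whence $P_{E_i} K P_{E_j} = 0$ for every $i \ge j$, since $E_i \subseteq X_j$ in that range. For the final block, $P_{E_m} K P_{E_m} = K_m = 0$ by the stopping condition, while $P_{E_m} K P_{E_{j'}} = 0$ for $j' < m$ already falls out of the column constraint at step $j'$. Second, $K_{j, j+1} \neq 0$ for $1 \le j \le m-1$: assuming instead that $P_{E_j} K P_{E_{j+1}} = 0$, combining with the step-$(j+1)$ identity $P_{X_{j+1}} K P_{E_{j+1}} = 0$ gives $P_{X_j} K P_{E_{j+1}} = (P_{E_j} + P_{X_{j+1}}) K P_{E_{j+1}} = 0$, so $E_j \cup E_{j+1}$ would also satisfy the defining condition at step $j$, contradicting the maximality of $E_j$. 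This is the only place where maximality is actually used.

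For the bound $m \le n+1$, I would use a direct rank count. Each superdiagonal block $K_{j, j+1}$ has range inside $L^2(E_j)$, and the subspaces $L^2(E_1), \ldots, L^2(E_{m-1})$ are pairwise orthogonal. Hence $\bigoplus_{j=1}^{m-1} \ran K_{j, j+1}$ is an orthogonal direct sum sitting inside $P_{E_1 \cup \cdots \cup E_{m-1}}(\ran K)$, of total dimension $\sum_{j=1}^{m-1} \mathrm{rank}(K_{j, j+1}) \ge m-1$ since each summand is non-zero. As $P_{E_1 \cup \cdots \cup E_{m-1}}(\ran K)$ has dimension at most $\mathrm{rank}(K) = n$, this forces $m - 1 \le n$. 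The same bound guarantees that the iterative construction terminates after finitely many steps.

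The main obstacle I anticipate is the very first step: making the selection of a maximal $E_j$ completely rigorous in the general measure space $(X, \mu)$. One has to verify that the upward-directed family $\{F \subseteq X_j : K_j P_F = 0\}$ admits an essentially maximal element, which uses both the closure under countable unions and the separability hypothesis on $L^2(X, \mu)$. Once this is pinned down, the block-form verification and the rank bound are essentially bookkeeping driven by the orthogonality of the decomposition.
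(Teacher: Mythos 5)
Your construction of the partition is essentially the paper's: the paper defines $Q_1=\sup\{R\in M^\infty(X,\mu): R^2=R=R^*,\ KR=0\}$, sets $K_1=(I-Q_1)K(I-Q_1)$, and recurses, which is exactly your iterative peeling of an essentially maximal $E_j$ with $K_jP_{E_j}=0$; your verifications of strict upper triangularity and of $K_{j,j+1}\neq 0$ via maximality also match the paper's. The existence of the essentially maximal set is unproblematic and handled in the paper by taking the supremum in the projection lattice of the masa (continuity of $K$ gives $KQ_1=0$).

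The genuine gap is in your bound $m\le n+1$ (and hence also in termination, which you make depend on that bound). The containment $\bigoplus_{j=1}^{m-1}\ran K_{j,j+1}\subseteq P_{E_1\cup\cdots\cup E_{m-1}}(\ran K)$ is false. What is true is $\ran K_{j,j+1}\subseteq P_{E_j}(\ran K)$, so the direct sum only lands in $\sum_j P_{E_j}(\ran K)$, which can have dimension up to $(m-1)\,{\rm rank}(K)$ rather than ${\rm rank}(K)$: if $\ran K$ is spanned by a single vector $u$ with $P_{E_1}u$ and $P_{E_2}u$ both non-zero, then $P_{E_1}(\ran K)+P_{E_2}(\ran K)$ is two-dimensional while $P_{E_1\cup E_2}(\ran K)$ is one-dimensional. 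In general $Q(\ran K)\subseteq\sum_j P_{E_j}(\ran K)$, not the reverse, so the dimension count collapses. The paper instead proves a rank decrement at each step: since $K_{j}P_{E_{j+1}}=P_{X_j}KP_{E_{j+1}}=P_{E_j}KP_{E_{j+1}}=K_{j,j+1}\neq 0$ (using your column constraint at step $j+1$), the subspace $\ran K_j$ contains a non-zero vector supported in $E_j$, hence annihilated by $P_{X_{j+1}}$; as $\ran K_{j+1}\subseteq P_{X_{j+1}}(\ran K_j)$, this yields ${\rm rank}(K_{j+1})\le{\rm rank}(K_j)-1$. This gives termination after at most $n$ peelings and $m\le n+1$ in one stroke (the paper phrases it as an induction on $n$). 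Alternatively, one can repair your count by working inside $\ran K$ itself: the spaces $\ran(KP_{E_{j+1}})$, $j=1,\dots,m-1$, all lie in $\ran K$, each is supported in $E_1\cup\cdots\cup E_j$ with non-zero component on $E_j$, and a flag argument then produces $m-1$ linearly independent vectors in $\ran K$; but some such replacement of your orthogonality argument is required.
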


\begin{proof}
We will use induction on $n$. Assume that the theorem holds for operators of ranks strictly less than $n$. 
Define the standard projection on $L^2(X, \mu)$ by
$$ Q_1 = \sup \{ R \in M^\infty(X, \mu) : R^2 = R = R^* , K R = 0 \} . $$
Since $K$ is continuous, we have $K Q_1 = 0$. It follows from Proposition \ref{zero_row} that 
$Q_1 \neq 0$. Let $E_1$ be a measurable set corresponding to $Q_1$, and let $K_1 = (I-Q_1) K (I-Q_1)$. 
Define the standard projection on $L^2(X, \mu)$ by
$$ Q_2 = \sup \{ R \in M^\infty(X, \mu) : R^2 = R = R^* , R \le I-Q_1, K_1 R = 0 \} . $$
Then $K_1 Q_2 = 0$ and  $Q_2 \le I-Q_1$. 
Since $(I-Q_1) K Q_2 = (I-Q_1) K (I-Q_1) Q_2 = K_1 Q_2 = 0$, we have
$Q_1 K Q_2 = K Q_2 \neq 0$, by the definition of $Q_1$.
From $Q_1 K Q_2 \neq 0$ and $K_1 Q_2 = 0$ it follows that \, ${\rm rank}(K_1) \le n-1$.
So, if $n=1$ then $K_1 = 0$, and $\{E_1, E_1^c\}$ is the desired partition of $X$.   
Assume therefore that $K_1 \neq 0$, so that $n \ge 2$. 
By induction hypothesis, there exist a positive integer $m$ 
and a partition $\{E_2, E_3, \ldots, E_m\}$  of $E_1^c$ such that $m-1 \le {\rm rank}(K_1) + 1 \le n$
and, relative to the decomposition $L^2 (E_1^c, \mu|_{E_1^c}) = \bigoplus_{j=2}^m L^2 (E_j, \mu|_{E_j})$,
the restriction of $K_1$ to the range $\ran (I-Q_1) =  L^2 (E_1^c, \mu|_{E_1^c})$ has the block-matrix form
$$  
K_1 = \left[ \begin{matrix}  
0 &  K_{2, 3} & K_{2, 4} & \ldots & K_{2, m-1} & K_{2, m}\cr
0 & 0 & K_{3, 4} & \ldots & K_{3, m-1} & K_{3, m}\cr
0 & 0 & 0 & \ddots & K_{4, m-1} & K_{4, m}\cr
\vdots & \vdots & \ddots & \ddots & \vdots &\vdots \cr
0 & 0 & 0 & \ldots & 0 & K_{m-1, m} \cr 
0 & 0 & 0 & \ldots & 0 & 0
\end{matrix}  \right] ,$$
with $K_{j,j+1} \neq 0$ for all $j=2, 3,\ldots, m-1$.  
Since $K_{1, 2} := Q_1 K Q_2 \neq 0$, the partition  $\{E_1, E_2, \ldots, E_m\}$ of $X$ has the desired properties.
\end{proof}

\begin{theorem}
\label{finite_rank}
Let $K$ be an operator on $L^2(X, \mu)$ of rank $n  \in \bN$.
If $K$  has increasing spectrum relative to standard compressions, then it admits a standard, multiplicity-free triangularization. 
Furthermore, there exist a positive integer $m \le 2 n+1$ 
and a partition $\{E_1, E_2, \ldots, E_m\}$  of $X$ such that,
relative to the decomposition $L^2 (X, \mu) = \bigoplus_{j=1}^m L^2 (E_j, \mu|_{E_j})$,
the operator $K$ has the block-matrix form
$$  
K = \left[ \begin{matrix}  
K_{1, 1} & K_{1, 2} & K_{1, 3} &  K_{1, 4} & \ldots & K_{1, m-1} & K_{1, m} \cr
0 & K_{2, 2} &  K_{2, 3} & K_{2, 4} & \ldots & K_{2, m-1} & K_{2, m}\cr
0 & 0 & K_{3, 3} & K_{3, 4} & \ldots & K_{3, m-1} & K_{3, m}\cr
0 & 0 & 0 & 0 & \ddots & K_{4, m-1} & K_{4, m}\cr
\vdots & \vdots & \vdots & \ddots & \ddots & \vdots &\vdots \cr
0 & 0 & 0 & 0 & \ldots & K_{m-1, m-1} & K_{m-1, m} \cr 
0 & 0 & 0 & 0 & \ldots & 0 & K_{m, m}
\end{matrix}  \right] ,$$
where each diagonal block $K_{j,j}$ can be non-zero only when $L^2 (E_j, \mu|_{E_j})$ is a one-dimensional space 
(corresponding to an atom), and in this case $K_{j,j}$ is a non-zero eigenvalue of $K$.
\end{theorem}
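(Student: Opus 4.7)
The plan is to proceed by induction on the rank $n$ of $K$. The base case $n = 0$ is trivial: take $m = 1$ and $E_1 = X$. For the inductive step, I first dispose of the easy case: if $\sigma(K) = \{0\}$, then the increasing spectrum hypothesis forces every standard compression $PKP$ to have $\sigma(PKP) \subseteq \{0\}$, and since $PKP$ has finite rank, it is nilpotent. Theorem \ref{nilpotent_finite_rank} then yields a partition of length $m \le n + 1 \le 2n + 1$ with all diagonal blocks zero; refining within each piece (on which $K$ acts as zero) furnishes the multiplicity-free triangularization.

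The substantive case is when $K$ has a non-zero eigenvalue $\lambda$. Here the plan hinges on the following key sublemma, an analog of \cite[Proposition 3.15]{MMR}: there exist standard projections $P_0 < P_1$ in $M^\infty(X, \mu)$, both of whose ranges are invariant under $K$, such that $P_1 - P_0 = P_{\{a\}}$ for some atom $a \in X$ and $(P_1 - P_0) K (P_1 - P_0) = \lambda P_{\{a\}}$. Granted the sublemma, set $K_0 := P_0 K P_0 |_{\ran P_0}$ and $K_2 := (I - P_1) K (I - P_1) |_{\ran (I - P_1)}$; each inherits finite rank and the increasing spectrum hypothesis as a standard compression. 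Since the rank of a block upper-triangular operator is at least the sum of the ranks of its diagonal blocks, $\operatorname{rank} K_0 + 1 + \operatorname{rank} K_2 \le n$, so $\operatorname{rank} K_0 + \operatorname{rank} K_2 \le n - 1$. The induction hypothesis applied to $K_0$ and $K_2$ produces partitions of lengths at most $2 \operatorname{rank} K_0 + 1$ and $2 \operatorname{rank} K_2 + 1$; concatenating them with the atom $\{a\}$ inserted between yields a partition of $X$ of total length at most $(2 \operatorname{rank} K_0 + 1) + 1 + (2 \operatorname{rank} K_2 + 1) \le 2n + 1$, satisfying both the block-triangular form and the diagonal-block condition (non-zero diagonal entries produced by the induction are non-zero eigenvalues of $K_0$ or $K_2$, hence of $K$ by spectral containment in block upper-triangular form).

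The main obstacle is the proof of the sublemma itself. My plan is to mirror the architecture of Section 3 at the level of a general measure space. Decompose $X = C \cup A$ into the atomless part $C$ and the atomic part $A$, and adapt the continuity-of-spectral-radius argument in the proof of Theorem \ref{quasi} to show that the compression $P_C K P_C$, viewed as an operator on $L^2(C, \mu|_C)$, is quasinilpotent; since it has finite rank, it is then nilpotent. Next generalize Proposition \ref{equal_spectrum} to the present setting to conclude that $K$ and $P_A K P_A$ share the same non-zero eigenvalues with the same algebraic multiplicities, so $\lambda$ is an eigenvalue of $P_A K P_A$. Theorem \ref{discrete} then triangularizes $P_A K P_A$ and produces a chain of atoms in $A$ whose diagonal entries exhaust the non-zero eigenvalues of $K$. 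The delicate final step is the lifting: given an atom $\{a\}$ in this atomic chain with diagonal $\lambda$, construct $P_0 < P_1$ in $M^\infty(X, \mu)$ with $P_1 - P_0 = P_{\{a\}}$ and with $\ran P_0$, $\ran P_1$ invariant under the full operator $K$ (not merely under $P_A K P_A$). This amounts to adjoining suitable standard subspaces of $L^2(C, \mu|_C)$ — guided by the nilpotent structure of $P_C K P_C$ from Theorem \ref{nilpotent_finite_rank} — so that the cross-block compressions $P_C K P_A$ and $P_A K P_C$ respect the chain. Executing this interleaving cleanly is the principal technical difficulty.
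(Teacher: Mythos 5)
Your inductive skeleton and the rank bookkeeping are sound: once you have the sublemma (an atom $\{a\}$ carrying the eigenvalue $\lambda$ as a $1\times 1$ diagonal block sitting between two $K$-invariant standard projections $P_0<P_1$), the count $(2\,{\rm rank}(K_0)+1)+1+(2\,{\rm rank}(K_2)+1)\le 2n+1$ goes through exactly as in the paper, and the nilpotent case is handled the same way via Theorem \ref{nilpotent_finite_rank}. The problem is that the sublemma carries essentially the entire content of the theorem, and your plan for proving it has a genuine gap at precisely the step you flag as ``the principal technical difficulty'': the lifting/interleaving. Triangularizing $P_AKP_A$ (via Theorem \ref{discrete}) and $P_CKP_C$ (via the nilpotent case) separately produces two chains living in $l^2(A)$ and $L^2(C,\mu|_C)$ respectively, but nothing in your sketch explains how the increasing-spectrum hypothesis controls the cross-blocks $P_AKP_C$ and $P_CKP_A$ so that the two chains can be merged into a single chain invariant under all of $K$; an arbitrary interleaving will generically fail to be invariant, and the hypothesis enters the problem exactly through these cross terms, so this step cannot be treated as routine.

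The paper avoids the interleaving altogether. It invokes \cite[Corollary 3.8]{MMR} to produce, for each non-zero eigenvalue $\lambda_i$ (with multiplicity $m_i$), atoms $A_{i,j}$ with $M_{i,j}KM_{i,j}=\lambda_iM_{i,j}$, and \cite[Corollary 3.12]{MMR} to show that the single operator $G=K-\sum_{i,j}\lambda_iM_{i,j}$, of rank at most $2n$, has nilpotent standard compressions on all of $X$. Theorem \ref{nilpotent_finite_rank} applied to $G$ then yields one global partition in which $G$ is strictly upper triangular; since $K$ and $G$ differ only by a diagonal operator supported on the atoms $A_{i,j}$, the $3\times 3$ invariant splitting around $A_{1,1}$ (your $P_0<P_1$) is read off directly from that partition, and the cross terms never have to be reconciled by hand. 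To salvage your route you would need either to prove your sublemma by some version of this subtraction argument or to supply an actual mechanism for the lifting; as written, the proof is incomplete.
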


\begin{proof}
In view of Theorem \ref{nilpotent_finite_rank} we may assume that $K$ has at least one non-zero eigenvalue.
Let $\lambda_1$, $\ldots$, $\lambda_r$ be non-zero eigenvalues of $K$, with algebraic multiplicities 
$m_1$, $\ldots$, $m_r$, respectively.  For each $i=1, \ldots, r$, let $M_{i,1}$,  $\ldots$, $M_{i,m_i}$ be 
the standard projections (corresponding to the atoms $A_{i,1}$,  $\ldots$, $A_{i,m_i}$, respectively) 
for which $M_{i,j} K M_{i,j} = \lambda_i M_{i,j}$ for all $j=1, \ldots, m_i$. The existence of these atoms is 
guaranteed by \cite[Corollary 3.8]{MMR}.
Now we apply  \cite[Corollary 3.12]{MMR} to conclude that the finite-rank operator 
$$ G = K - \sum_{i=1}^r \sum_{j=1}^{m_i} \lambda_i M_{i,j} $$ 
has nilpotent standard compressions. 
Since ${\rm rank}(K) = n$ and $\sum_{i=1}^r m_i \le n$, it holds that \, ${\rm rank}(G) \le 2 n$.

By Theorem \ref{nilpotent_finite_rank}, there exist a positive integer $m \le 2 n+1$ 
and a partition $\{E_1, E_2, \ldots, E_m\}$  of $X$ such that,
relative to the decomposition $L^2 (X, \mu) = \bigoplus_{k=1}^m L^2 (E_k, \mu|_{E_k})$,
the operator $G$ has the block-matrix form
$$  
G = \left[ \begin{matrix}  
0 & G_{1, 2} & G_{1, 3} &  G_{1, 4} & \ldots & G_{1, m-1} & G_{1, m} \cr
0 & 0 &  G_{2, 3} & G_{2, 4} & \ldots & G_{2, m-1} & G_{2, m}\cr
0 & 0 & 0 & G_{3, 4} & \ldots & G_{3, m-1} & G_{3, m}\cr
0 & 0 & 0 & 0 & \ddots & G_{4, m-1} & G_{4, m}\cr
\vdots & \vdots & \vdots & \ddots & \ddots & \vdots &\vdots \cr
0 & 0 & 0 & 0 & \ldots & 0 & G_{m-1, m} \cr 
0 & 0 & 0 & 0 & \ldots & 0 & 0
\end{matrix}  \right] ,$$
with $G_{j,j+1} \neq 0$ for all $j=1, 2, \ldots, m-1$.  
The block-matrix form of $K$ differs from the last one only on some diagonal blocks, which are restrictions of some 
$\lambda_i M_{i,j}$ to $L^2 (E_k, \mu|_{E_k})$. Clearly, this block-matrix form of $K$ is not necessarily the desired one.

We will complete the proof of the theorem with induction on $n$. 
Assume that the theorem holds for operators of ranks strictly less than $n$.  
Among the sets  $E_1$, $\ldots$, $E_m$ there is exactly one, say $E_p$, that contains the atom $A_{1,1}$.
Let $F_1 = \cup_{j=1}^{p-1} E_j$ and $F_2 = (E_p \setminus A_{1,1}) \cup (\cup_{j=p+1}^{m} E_j)$.
It follows from the above block-matrix form of $K$ that, relative to the decomposition 
$$ L^2 (X, \mu) = L^2 (F_1, \mu|_{F_1}) \oplus L^2 (A_{1,1}, \mu|_{A_{1,1}}) \oplus L^2 (F_2, \mu|_{F_2}) , $$
the operator $K$ has the block-matrix form
$$  
K = \left[ \begin{matrix}  
K_1 & * & * \cr
0 & \lambda_1 &  * \cr
0 & 0 & K_2 
\end{matrix}  \right] . $$
Now, if $n=1$ then $K_1 = 0$ and $K_2=0$, so that the theorem holds in this case.
Assume therefore that $n \ge 2$. Then, for each $j \in \{1, 2\}$, the operator $K_j$ on $L^2 (F_j, \mu|_{F_j})$ 
has the desired block-matrix form with at most  $(2 \, {\rm rank}(K_j) + 1)$ diagonal blocks, by the induction hypothesis.
Therefore, the operator $K$ has  the block-matrix form in which the number of diagonal blocks is at most 
$$ (2 \, {\rm rank}(K_1) + 1) + 1 + (2 \, {\rm rank}(K_2) + 1) = 
      2 \, ({\rm rank}(K_1) + 1 + {\rm rank}(K_2)) + 1 \le 2 n + 1 . $$
This completes the proof of the theorem.
\end{proof}

We complete this paper by an example showing that the bound $2 n +1$ in Theorem \ref{finite_rank} cannot be improved.

\begin{example} {\rm 
Let $n \in \bN$,  and let $e_1$, $e_2$, $\ldots$, $e_{2 n+1}$ be the standard basis vectors of $\bC^{2 n+1}$.
For each $j=1,2, \ldots, n$, let $f_j = \sum_{i=2 j}^{2n+1} e_i$. Define 
$$ K = \sum_{j=1}^n (e_{2j-1} + e_{2 j}) \cdot f^t_j . $$
For example, if $n=2$ then
$$ K = \left[ \begin{matrix}  
0 & 1 & 1 &  1 & 1 \cr
0 & 1 & 1 &  1 & 1 \cr
0 & 0 & 0 &  1 & 1 \cr
0 & 0 & 0 &  1 & 1 \cr
0 & 0 & 0 &  0 & 0 
\end{matrix}  \right] . $$
Then $K$ is an upper triangular matrix of rank $n$, and so it has increasing spectrum relative to standard compressions.
Furthermore, it already has the form guaranteed by Theorem \ref{finite_rank}, and it is easy to see that 
there is no such form with a number of diagonal blocks smaller than $2n+1$. }
\end{example}

\vspace{2mm}
{\bf
\begin{center}
 Acknowledgment.
\end{center}
} 
The author was supported by grant P1-0222 of the Slovenian Research Agency.

\vspace{2mm}

\noindent
Roman Drnov\v sek \\
Department of Mathematics \\
Faculty of Mathematics and Physics \\
University of Ljubljana \\
Jadranska 19 \\
SI-1000 Ljubljana \\
Slovenia \\
e-mail : roman.drnovsek@fmf.uni-lj.si 

\end{document}